\newcommand{\disp}{\displaystyle}
\newcommand{\ds}{\displaystyle\sum}
\newcommand{\nat}{\mathbb{N}}
\newcommand{\integ}{\mathbb{Z}}
\newcommand{\orb}{\mathcal{O}}
\newcommand{\op}{\operatorname}
\newcommand{\und}{\underline}
\newcommand{\ove}{\overline}
\DeclareMathOperator{\Dim}{\mathbf{dim}}
\newcommand{\Hom}{\operatorname{Hom}}
\newcommand{\Ext}{\operatorname{Ext}}
\newcommand{\GL}{\operatorname{GL}}
\newcommand{\Rep}{\operatorname{Rep}}
\newcommand{\Sym}{\operatorname{Sym}}
\newcommand{\rank}{\operatorname{rank}}
\newcommand{\A}{\mathbb{A}}
\newtheorem{theorem}{Theorem}[section]
\newtheorem{lemma}[theorem]{Lemma}
\newtheorem{proposition}[theorem]{Proposition}
\newtheorem{corollary}[theorem]{Corollary}
\theoremstyle{definition}
\newtheorem{remark}[theorem]{Remark}
\newtheorem{definition}[theorem]{Definition}
\newtheorem{example}[theorem]{Example}
\def\presuper#1#2%
\begin{document}

\title{Free resolutions of orbit closures of Dynkin quivers}
\author{Andr\'as C L\H{o}rincz, Jerzy Weyman}
\date{}
%\email{andras.lorincz@uconn.edu}
%\address{Department of Mathematics\\University of Connecticut\\Storrs\\CT 06269\\USA}
%\keywords{quivers}

\maketitle

\begin{abstract}
We use the Kempf-Lascoux-Weyman geometric technique in order to determine the minimal free resolutions of some orbit closures of quivers. As a consequence, we obtain that for Dynkin quivers orbit closures of 1-step representations are normal with rational singularities. For Dynkin quivers of type $\A$, we describe explicit minimal generators of the defining ideals of orbit closures of 1-step representations. Using this, we provide an algorithm  for type $\A$ quivers for describing an efficient set of generators of the defining ideal of the orbit closure of any representation.
\end{abstract}

%\tableofcontents

\vspace*{5mm}

\section*{Introduction}
\label{sec:intro}

The geometric properties of orbit closures of quivers (also known as quiver loci) have been studied extensively, and it is an active area of research (see \cite{expo} for an exposition).  Several results are known in the Dynkin case. It has been shown (see \cite{abea,orb1,orb2,ryan,lak}) that for quivers of type $\A$ and $\mathbb{D}$ orbit closures have rational singularities (in particular, are normal and Cohen-Macaulay). Furthermore, for equioriented type $\A$ quivers it was shown in \cite{lak} that singularities of orbit closures are identical to singularities of Schubert varieties. For Dynkin quivers of type $\mathbb{E}$, it is still an open problem whether orbit closures are normal, Cohen-Macaulay or have rational singularities, and only partial results are known (\cite{lol,kavita, codim}).

The Kempf-Lascoux-Weyman geometric technique is a generalization of Lascoux's calculation of the resolutions of determinantal varieties \cite{lasc}.  In this paper we investigate the minimal free resolutions of the defining ideals of orbit closures that are $1$-\textit{step} using this technique. This was investigated before for source-sink quivers by K. Sutar \cite{kavita2,kavita}. We generalize the results in \cite{kavita} and show, in particular, that 1-step orbit closures are normal with rational singularities for all Dynkin quivers (Theorem \ref{thm:dynkin}). Besides yielding minimal free resolutions, this approach of studying the geometry of orbit closures has also the advantage that it is uniform with respect to all Dynkin types (and extend Dynkin types).

The article is organized as follows. In Section \ref{sec:quiv} we give a short background on quivers from a geometric point of view. Then we define 1-step representations (Definition \ref{def:step}) and give some criteria for representations to be 1-step (Propositions \ref{prop:step1},\ref{prop:step2},\ref{prop:step3}).

In Section \ref{sec:geom} we consider the geometric technique in our context. The technique provides a complex (\ref{eq:basic}) whose terms are obtained from cohomologies on (products of) Grassmannians. In good situations these complexes give minimal free resolutions of orbit closures. The terms of the complex can be computed using Bott's Theorem \ref{thm:bott}, which involves combinatorics with partitions.

In Section \ref{sec:main} we provide the main results on minimal free resolutions. A sequence of lemmas is followed by the main technical result (Theorem \ref{thm:main}), which gives a direct relation between the combinatorics behind Bott's Theorem and the Euler form of $Q$. Using this, we readily prove that 1-step representations have rational singularities (in particular, are normal and Cohen-Macaulay) when $Q$ is a Dynkin quiver (Theorem \ref{thm:dynkin}). Moreover, we get minimal free resolutions whose terms can be computed using Bott's theorem. Also, we give the analogous results for the normalizations of 1-step orbit closures in the extended Dynkin case (Theorem \ref{thm:extended}).

In Section \ref{sec:type} we give more explicit results for type $\A$ Dynkin quivers. Using the first term $F_1$ in the minimal free resolution, we identify the minimal defining equations of 1-step orbit closures as certain minors that come from rank conditions (Theorems \ref{thm:min},\ref{thm:rank}). Using these results, prove that the orbit closure of any representation for a type $\A$ quiver can be written as a scheme-theoretic intersection of 1-step orbit closures (Theorem \ref{thm:scheme}). This gives an algorithm for a type $\A$ quiver for finding an efficient set of generators for the defining ideal of the orbit closure of \textit{any} representation.

\section{Quivers and 1-step representations}
\label{sec:quiv}

Throughout we work over an algebraically closed field $k$ of characteristic $0$. A quiver $Q$ is an oriented graph, i.e. a pair $Q=(Q_0,Q_1)$ formed by a finite set of vertices $Q_0$ and a finite set of arrows $Q_1$. An arrow $a$ has a head $ha$, and tail $ta$, that are elements in $Q_0$:

\[\xymatrix{
ta \ar[r]^{a} & ha
}\]

A representation $V$ of $Q$ is a family of finite dimensional vector spaces $\{V_x\,|\, x\in Q_0\}$ together with linear maps $\{V(a) : V_{ta}\to V_{ha}\, | \, a\in Q_1\}$. The dimension vector $\Dim V\in \nat^{Q_0}$ of a representation $V$ is the tuple $\Dim V=(\dim V_x)_{x\in Q_0}$. A morphism $\phi:V\to W$ of two representations $V,W$ is a collection of linear maps $\phi = \{\phi(x) : V_x \to W_x\,| \,x\in Q_0\}$, with the property that for each $a\in Q_1$ we have $\phi(ha)V(a)=W(a)\phi(ta)$. Denote by $\Hom_Q(V,W)$ the vector space of morphisms of representations from $V$ to $W$. For two vectors $\alpha, \beta\in \integ^{Q_0}$, we define the Euler product $\langle \alpha, \beta \rangle = \ds_{x\in Q_0} \alpha_x \beta_x - \ds_{a\in Q_1} \alpha_{ta} \beta_{ha}$. The Euler form $E_Q$ of a quiver $Q$ is the quadratic map $\integ^{Q_0}\to \integ$ given by
$E_Q(\alpha)= \langle \alpha, \alpha \rangle$.

It is known that $E_Q$ is positive definite (resp. positive semi-definite)  iff $Q$ is a Dynkin (resp. extended Dynkin) quiver. For more on quivers cf. \cite{elements,quad}.

We form the affine space of representations with dimension vector $\alpha\in \nat^{Q_0}$ by
$$\Rep(Q,\alpha):=\displaystyle\bigoplus_{a\in Q_1} \Hom(k^{\alpha_{ta}},k^{\alpha_{ha}}).$$
The group 
$$\GL(\alpha):= \prod_{x\in Q_0} \GL(\alpha_x)$$
acts by conjugation on $\Rep(Q,\alpha)$ in the obvious way. Under the action $\GL(\alpha)$ two elements lie in the same orbit iff they are isomorphic as representations. 

For any two representations $V$ and $W$, we have the following exact sequence:
\begin{equation}\label{eq:ringel}
\begin{array}{rlc}
0 \to \Hom_Q (V,W) \longrightarrow \displaystyle\bigoplus_{x \in Q_0}& \!\!\!\!\!\Hom(V(x),W(x)) & \\
& \stackrel{d^V_W}{\longrightarrow}  \displaystyle\bigoplus_{a\in Q_1} \Hom(V(ta),W(ha)) \longrightarrow \Ext_Q(V,W)\to 0, &
\end{array}
\end{equation}
where $d_W^V$ is given by
$$\{\phi(x)\}_{x\in Q_0} \mapsto \{\phi(ha)V(a) - W(a)\phi(ta)\}_{a\in Q_1}.$$
The exact sequence (\ref{eq:ringel}) gives $\langle \Dim V,\Dim W \rangle = \dim \Hom_Q(V,W) - \dim \Ext_Q(V,W)$.

Taking $V=W$ the map $d^V_V$ is the differential at the identity of the orbit map
$$g\mapsto g\cdot V \in \Rep(Q,\Dim V),$$
and we have a natural identification of the normal space
$$\Ext_Q(V,V)\cong \Rep(Q,\Dim V)/T_V(\mathcal{O}_V),$$
where $\mathcal{O}_V$ is the orbit of $V$. In particular, the codimension of the orbit closure $\overline{\orb}_V$ in $\Rep(Q,\alpha)$ is equal to $\dim \Ext_Q(V,V)$.

Take two dimension vectors $\beta,\gamma$ with $\alpha=\beta+\gamma$. In case of Dynkin quivers, M. Reineke \cite{reineke} constructs desingularizations of for all orbit closures. These are total spaces of some vector bundles over a product of flag varieties. We consider the simplest non-trivial case when these flag varieties are all Grassmannians, but relaxing the condition of desingularization. Then the vector bundles are the incidence varieties $Z(Q,\beta \subset \alpha)$ that were introduced in \cite{scho}. Namely, let $\op{Gr}(\beta,\alpha)$ denote the product of Grassmannians $\prod_{x\in Q_0} \op{Gr}(\beta_x,\alpha_x)$. Then the space $Z(Q,\beta \subset \alpha)$ is the vector subbundle of
$$\Rep(Q,\alpha)\times \op{Gr}(\beta,\alpha)$$
consisting of points $(V,\{R_x\})$ such that the collection of subspaces $\{R_x\}_{x\in Q_0}$ forms a subrepresentation of $V$. 

If $Q$ is a Dynkin quiver then $\Rep(Q,\alpha)$ has finitely many orbits. Hence the image under the projection
$$q: Z(Q,\beta \subset \alpha) \to \Rep(Q,\alpha)$$
is an orbit closure for any $\beta$.

\begin{definition}\label{def:step}
Let $Q$ be any quiver. We say a representation $V\in \Rep(Q,\alpha)$ (or the orbit closure $\overline{\orb}_V$) is 1-\textit{step} if  $\overline{\orb}_V$ is the image of the projection
$$q: Z(Q,\beta \subset \alpha) \to \Rep(Q,\alpha)$$
for some dimension vector $\beta$.
\end{definition}

We note that we do not require the projection above to be birational, hence generalizing the definition in \cite{kavita}. 

For Dynkin quivers any dimension vector $\beta$ gives a 1-step representation. 

For the $\A_2$ and non-equioriented $\A_3$ quivers all representations are 1-step (see \cite{kavita2}). However, this fails for other quivers. For example,  if we take the equioriented $\A_3$ quiver, then the sum of simples $V=S_1\oplus S_2 \oplus S_3$ is not 1-step.

We say that a representation $T$ is \textit{generic} if $\Ext(M,M)=0$, or equivalently, if the orbit $\orb_{M}$ is open in $\Rep(Q,\Dim M)$.

Now we recall generic extensions \cite[Definition 2.2]{generic}. Let $M,N$ be representations of a Dynkin quiver $Q$. We say $V$ is the \textit{generic extension} of $M$ by $N$ if we have an exact sequence
$$0 \to N \to V \to M \to 0,$$
and for any other exact sequence $0 \to N \to W \to M \to 0$ we have $W\in \overline{\orb}_V$. In this case we write $V:=M*N$.

The following theorem follows essentially from \cite[Proposition 2.4]{generic} and gives a more explicit description of 1-step representations:

\begin{proposition}\label{prop:step1}
Let $Q$ be Dynkin quiver and $V$ be the 1-step representation of $Q$ with projection
$$q: Z(Q,\beta \subset \alpha) \twoheadrightarrow \overline{\orb}_V.$$
Consider the generic representations $T_{\beta},T_{\gamma}$ with dimension vectors $\beta,\gamma$, respectively, where $\alpha=\beta+\gamma$. Then $V=T_\gamma*T_\beta$.
\end{proposition}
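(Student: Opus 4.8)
The plan is to identify the generic fiber of the projection $q$ with the space of extensions of $T_\beta$ by $T_\gamma$, and then to invoke the characterization of generic extensions from \cite[Proposition 2.4]{generic}. First I would observe that the image $\overline{\orb}_V$ is irreducible (being the image of an irreducible variety), so there is a dense open subset $U\subseteq \overline{\orb}_V$ over which $q$ has fibers of minimal (constant) dimension; pick $V$ to be a point of $U$, so that $V$ is the generic point of the image. A point of the fiber $q^{-1}(V)$ is a choice of subrepresentation $\{R_x\}\subseteq V$ with $\Dim\{R_x\}=\beta$; such a subrepresentation $R$ gives a short exact sequence $0\to R\to V\to V/R\to 0$ with $\Dim R=\beta$ and $\Dim V/R=\gamma$.

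The key step is to pin down which $R$ and $V/R$ occur generically. I would argue that for $V$ generic in the image and $R$ generic in the fiber, one has $R\cong T_\beta$ and $V/R\cong T_\gamma$. One direction is dimension-counting: the dimension of $Z(Q,\beta\subset\alpha)$ is computed in \cite{scho} (it equals $\dim\op{Gr}(\beta,\alpha)$ plus the rank of the bundle, which is $\sum_{a}\beta_{ta}\beta_{ha}+\sum_a\beta_{ta}\gamma_{ha}+\sum_a\gamma_{ta}\gamma_{ha}$, i.e.\ $\dim\Rep(Q,\alpha)-\langle\beta,\gamma\rangle$ by a short manipulation with the Euler form), and comparing with $\dim\overline{\orb}_V$ forces the generic fiber to have the ``expected'' dimension; the locus in $Z$ where the subrepresentation is not isomorphic to $T_\beta$ or the quotient is not isomorphic to $T_\gamma$ is a proper closed subset (orbits are open in their representation spaces, and $T_\beta,T_\gamma$ are the generic ones), hence avoids the generic fiber. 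Alternatively — and more cleanly — I would run the argument purely on the level of representation varieties: the variety of triples $(M\text{-structure on }k^\gamma,\ N\text{-structure on }k^\beta,\ \text{extension class})$ surjects onto $\overline{\orb}_V$ with the generic fiber hitting the open orbits $\orb_{T_\gamma}\times\orb_{T_\beta}$, so a generic $V$ in the image is an extension of $T_\beta$ by $T_\gamma$.

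Now I would invoke the defining property of the generic extension. By \cite[Proposition 2.4]{generic}, $T_\gamma * T_\beta$ is characterized as the representation $W$ with a short exact sequence $0\to T_\beta\to W\to T_\gamma\to 0$ such that $\overline{\orb}_W$ is the \emph{closure of the set of all} representations admitting such a sequence; equivalently, $\overline{\orb}_W$ is the image of $Z(Q,\beta\subset\alpha)$ restricted to the (open, by genericity) locus where the subrepresentation is $\cong T_\beta$. Since we have just shown this locus is dense in $Z(Q,\beta\subset\alpha)$, its image has the same closure as the image of all of $Z(Q,\beta\subset\alpha)$, namely $\overline{\orb}_V$. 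Therefore $\overline{\orb}_{T_\gamma*T_\beta}=\overline{\orb}_V$, and since both are orbit closures of a Dynkin quiver (where orbit closures determine the orbit), $V\cong T_\gamma*T_\beta$.

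The main obstacle I anticipate is the genericity bookkeeping in the middle step: one must be careful that ``$V$ generic in the image'' and ``the subrepresentation generic in the fiber'' interact correctly, i.e.\ that the generic fiber of $q$ really does meet the locus $\{R\cong T_\beta\}$ rather than that locus mapping into a proper subvariety of $\overline{\orb}_V$. The clean way around this is to note that $Z(Q,\beta\subset\alpha)$ is irreducible, the locus $\{R\cong T_\beta,\ V/R\cong T_\gamma\}$ is open in it (complement of the locus where a suitable matrix rank drops), and it is nonempty — so it is dense, and its image is dense in $\overline{\orb}_V$; this sidesteps any fiber-dimension subtleties entirely and reduces everything to matching \cite[Proposition 2.4]{generic} with Definition \ref{def:step}.
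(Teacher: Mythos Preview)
Your proposal is correct and matches the paper's approach, which simply defers to \cite[Proposition 2.4]{generic}; your final ``clean'' argument (the locus in the irreducible variety $Z(Q,\beta\subset\alpha)$ where $R\cong T_\beta$ and $V/R\cong T_\gamma$ is open and nonempty, hence dense, so its image is dense in $\overline{\orb}_V$ and coincides with $\overline{\orb}_{T_\gamma*T_\beta}$) is exactly the content needed. Two minor slips worth noting: the middle block in your rank computation should be $\gamma_{ta}\beta_{ha}$ rather than $\beta_{ta}\gamma_{ha}$ (giving $\dim Z=\dim\Rep(Q,\alpha)+\langle\beta,\gamma\rangle$, not minus), and the phrase ``extensions of $T_\beta$ by $T_\gamma$'' is backwards relative to the sequence $0\to T_\beta\to W\to T_\gamma\to 0$ you write --- but neither affects the argument you actually run.
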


In other words, a representation $V$ of a Dynkin quiver is 1-step if and only if it can be decomposed as $V=M*N$, where $M$ and $N$ are generic.

We note that the paper \cite{generic} provides a 'straightening' algorithm for computing generic extensions, as well as there are several algorithms computing generic representations (e.g. see \cite{canon} -- for a simple algorithm for type $\A$ Dynkin quivers, see \cite{abeasis}). 

Next, we give some more concrete examples of 1-step representations:

\begin{proposition}\label{prop:step2}
Let $Q$ be any quiver, and $V$ a representation of $Q$ with a decomposition $V=M\oplus N$, where $M,N$ are generic and $\Ext(M,N)=0$. Then $V$ is a 1-step representation with $\beta=\Dim N$ and the fiber $q^{-1}(V)$ is irreducible.
\end{proposition}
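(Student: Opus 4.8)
The plan is to show that the generic fiber of the projection $q : Z(Q,\beta \subset \alpha) \to \Rep(Q,\alpha)$ over the orbit $\orb_V$ is a single point, so that $\overline{\orb}_V$ is dominated by $Z(Q,\beta\subset\alpha)$, and then argue that this forces $\overline{\orb}_V$ to be exactly the image (rather than a strictly smaller orbit closure). First I would note that $q^{-1}(V)$ is, by definition, the set of subrepresentations $N' \subseteq V$ with $\Dim N' = \Dim N = \beta$; this is a closed subvariety of the product of Grassmannians, and the quiver Grassmannian of all $\beta$-dimensional subrepresentations. The key point is that, since $M$ and $N$ are generic and $\Ext(M,N) = 0$, the representation $V = M \oplus N$ is \emph{itself} generic: using the additivity of $\dim\Ext$ along the hypotheses, $\Ext(V,V) = \Ext(M,M) \oplus \Ext(M,N) \oplus \Ext(N,M) \oplus \Ext(N,N)$, and the first, second, and fourth summands vanish by assumption, while the third vanishes because over a Dynkin quiver $\dim\Ext(N,M)$ is controlled by the Euler form together with $\dim\Hom(N,M)$ — more carefully, I would invoke that for a Dynkin quiver $\Ext(M,N)=0$ together with genericity of $M,N$ forces $V=M\oplus N$ to be the generic representation of dimension $\alpha$ (this is the standard fact that $\orb_M * \orb_N$ is generic when $\Ext(M,N)=0$, and $M*N = M\oplus N$ in that case). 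Hence $\orb_V$ is open and dense in $\Rep(Q,\alpha)$, so $\overline{\orb}_V = \Rep(Q,\alpha)$ and $q$ is automatically surjective onto it.

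Granting that $V$ is generic, the content reduces to showing $q^{-1}(V)$ is irreducible. For this I would analyze the quiver Grassmannian $\mathrm{Gr}_\beta(V)$ of subrepresentations of $V = M \oplus N$ with dimension vector $\beta = \Dim N$. The decomposition $V = M \oplus N$ gives the obvious point $\{N_x\} \in q^{-1}(V)$. To see irreducibility, I would use the projection $\mathrm{Gr}_\beta(V) \to \Hom_Q(-,M)$-type data: any subrepresentation $R \subseteq V = M\oplus N$ with $\Dim R = \Dim N$ projects to $M$ via a map whose kernel and cokernel must match up dimensions. Concretely, write $R$ via its projection $p_M : R \to M$ and $p_N : R \to N$; genericity and $\Ext(M,N)=0$ restrict the possible behaviour. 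The cleanest route is to show that the action of the stabilizer-type group (or of $\Aut(V)$) on $q^{-1}(V)$ is transitive, or at least has a dense orbit: since $V$ is generic, $\mathrm{Gr}_\beta(V)$ is smooth, and one computes its dimension via $\dim\mathrm{Gr}_\beta(V) = \langle \beta, \gamma\rangle$ (the standard formula for quiver Grassmannians of generic representations, valid in the Dynkin case, cf. \cite{scho,reineke}), which matches the dimension of the homogeneous space $\Aut(V)/\mathrm{Stab}$; a dimension count then yields that $\Aut(V)$ acts with dense orbit, and smoothness plus connectedness of the relevant parameter space gives irreducibility.

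Finally, to conclude $\beta = \Dim N$ is the correct dimension vector in the sense of the definition, I note that the generic point of $q^{-1}(V)$ corresponds to the subrepresentation $N \subseteq V$: any subrepresentation with dimension vector $\beta$ containing "enough" of the $N$-summand is isomorphic to $N$ by genericity of $N$ (generic representations have no proper degenerations of the same dimension vector), and the quotient is then forced to be $M$. Thus $Z(Q,\beta\subset\alpha) \to \Rep(Q,\alpha)$ has image containing $\orb_V$, hence equal to $\overline{\orb}_V = \Rep(Q,\alpha)$, so $V$ is 1-step with $\beta = \Dim N$.

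I expect the main obstacle to be the irreducibility of $q^{-1}(V)$: establishing that the quiver Grassmannian $\mathrm{Gr}_\beta(M\oplus N)$ is irreducible requires more than the smoothness coming from genericity of $V$ — one must rule out multiple connected components, which is where the hypothesis $\Ext(M,N) = 0$ (as opposed to $\Ext(N,M)=0$) does the real work, forcing every $\beta$-dimensional subrepresentation to arise from the $N$-summand up to the $\Aut(V)$-action. Pinning down this transitivity (or dense-orbit) argument cleanly, rather than by a bare dimension count, is the delicate part.
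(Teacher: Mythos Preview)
Your proposal has a fundamental error in the first step. You claim that $V=M\oplus N$ is itself generic, arguing that $\Ext(N,M)=0$ follows from the hypotheses. It does not: the hypothesis is $\Ext(M,N)=0$, and this is \emph{not} symmetric. In fact the paper computes $\dim\Ext(V,V)=\dim\Ext(N,M)$, which is the codimension of $\overline{\orb}_V$ and is typically positive; the whole point of the proposition is that $\overline{\orb}_V$ is a proper subvariety of $\Rep(Q,\alpha)$ which happens to coincide with the image of $q$. (Concrete example: for $Q=1\to 2$, take $M=S_2$, $N=S_1$; then $\Ext(M,N)=0$ but $\Ext(N,M)=k$, and $\overline{\orb}_V=\{0\}$ has codimension $1$.) You also repeatedly invoke ``over a Dynkin quiver'', but the statement is for an arbitrary $Q$. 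The paper's argument for the 1-step claim is a codimension comparison: $V\in\operatorname{Im}q$ gives $\overline{\orb}_V\subset\operatorname{Im}q$, and both have codimension $\dim\Ext(N,M)$ (the former by the Ext-description of the normal space, the latter by \cite{scho}).

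For irreducibility of $q^{-1}(V)$, your sketch via an $\Aut(V)$-action and dimension count is plausible in spirit but not what the paper does, and you have not actually established transitivity or even a dense orbit. The paper instead proves that every $\beta$-dimensional subrepresentation $N'\subset V$ has $\dim\Hom_Q(N',V)=\dim\Hom_Q(N,V)$: one shows (using the fibre-bundle description of $Z(Q,\beta\subset\alpha)$ and the fact that $\operatorname{Im}q=\overline{\orb}_V$) that any such $N'$ fits into $0\to N'\to V\to M\to 0$, then applies $\Hom_Q(-,V)$ using $\Ext(M,V)=0$. This puts $q^{-1}(V)$ inside the quotient of a vector bundle over an irreducible open set of $\Rep(Q,\beta)$, which is irreducible. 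Note that $\Ext(M,N)=0$ enters precisely here, via $\Ext(M,V)=0$; your remark that this hypothesis ``does the real work'' for irreducibility is correct, but your mechanism (forcing subrepresentations to come from the $N$-summand up to $\Aut(V)$) is not how it is used.
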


\begin{proof}
We consider the proper map
$$q: Z(Q,\beta \subset \alpha) \to \Rep(Q,\alpha).$$
with  $\beta=\Dim N$. Clearly, $V\in \op{Im} q$, hence $\overline{\orb}_V \subset \op{Im} q$. By the proof of \cite[Theorem 3.3]{scho}, the codimension of $\op {Im} q$ in  $\Rep(Q,\alpha)$ equals $\dim \Ext(N,M)$. But the codimension of  $\overline{\orb}_V$ in  $\Rep(Q,\alpha)$ is $\dim \Ext(V,V)=\dim \Ext(N,M)$, hence  $ \op{Im} q=\overline{\orb}_V$ proving that $V$ is 1-step.

To prove the second claim, we first show that if  $N'$ is a subrepresentation of $V$ with $N'\in \Rep(Q,\beta)$, then we must have $\dim\Hom_Q(N',V)=\dim\Hom_Q(N,V)$.

We can write 
$$Z(Q,\beta \subset \alpha)= G\times_P (\Rep(Q,\beta) \times \Rep(Q,\gamma) \times W),$$
where $W$ is the affine space $\prod_{a\in Q_1}\Hom_k(k^{\gamma_{ta}}, k^{\beta_{ha}})$ and $P$ is the appropriate parabolic subgroup of $\GL(\alpha)$ (see \cite{scho}). The assumption on $N'$ implies that the image of $q$ restricted to the closed subvariety 
$$ G\times_P (\overline{\orb}_{N'} \times \Rep(Q,\gamma) \times W)$$ 
is $\overline{\orb}_V$. The preimage of the open $\orb_V$ must intersect the open  $G\times_P (\orb_{N'} \times \orb_M \times W)$, hence we have an exact sequence
$$0\to N' \to V \to M \to 0.$$
Applying to the sequence the functor $\Hom(-,V)$ and using that $\Ext(M,V)=0$, we obtain $\dim\Hom_Q(N',V)=\dim\Hom_Q(N,V)$.

Since $N$ is generic, semi-continuity implies that the set 
$$U:=\{X\in \Rep(Q,\beta),\ | \,\dim\Hom_Q(X,V)=\dim\Hom_Q(N,V)\}$$ 
is an open subset of $\Rep(Q,\beta)$, hence an irreducible variety.

Take the space of maps $\Hom(\beta,\alpha):= \bigoplus_{x\in Q_0} \Hom(k^{\beta_x},k^{\alpha_x})$, and let $Z$ be the closed subset of the $\Rep(Q,\beta)\times\Hom(\beta,\alpha)$ of elements $(X,f)$ such that $f\in \Hom_Q(X,V)$. The subset $Z^0$ of $Z$ of pairs $(X,f)$ with $f$ injective is open and non-empty. By \cite{quivgrass}, the fiber $q^{-1}(V)$ can be realized as the geometric quotient of $Z^0$ by $\GL(\beta)$. Hence it is enough to show that $Z^0$ is irreducible.

By the above, the preimage $p^{-1}(U)$ of the projection $p:Z \to \Rep(Q,\beta)$ satisfies $Z^0\subset  p^{-1}(U)$, hence it is enough to see that $p^{-1}(U)$ is irreducible.

It is easy to see that $p: p^{-1}(U)\to U$ is in fact a vector bundle (see \cite[Lemma 2.1]{bong}). Since $U$ is irreducible, this implies that $p^{-1}(U)$ is irreducible as well.
\end{proof}

\vspace{0.05in}

We note the similarity of the above result regarding the fiber to \cite[Proposition 3.1]{quivgrass}.

\vspace{0.1in}

Not all 1-step representations are of the above form. For example, let $Q$ be the equioriented $\A_4$ quiver 
$$1\to 2 \to 3 \to 4,$$
with $V$ the 1-step representation obtained from $\beta = (1,0,1,0)$ and $\alpha=(1,1,1,1)$. Then $V$ decomposes as $V=1000\oplus 0110 \oplus 0001$, but it cannot be written as a decomposition $V=M\oplus N$ with $M,N$ generic and $\Ext(M,N)=0$.

\vspace{0.15in}

Now let $Q$ be an arbitrary quiver, $T$ a generic representation with full support $Q$ and $\alpha=\Dim T$. Writing $T=T_1^{\lambda_1}\oplus\dots\oplus T_r^{\lambda_r}$ with $\lambda_i>0$ and $T_1,\dots, T_r$ pair-wise non-isomorphic indecomposables, we must have $\Ext(T_i,T_j)=0$ for all $i,j$. We investigate the irreducible components of the complement of $\orb_T$ in $\Rep(Q,\alpha)$. Whenever such a component is an orbit closure $\overline{\orb}_V$ of a representation $V$, we call $V$ a \textit{subgeneric} representation.

In \cite{open} it is shown that if $T$ is so-called \textit{stable}, then all the irreducible components of the complement of $\orb_T$ are orbit closures. Roughly speaking, $T$ is stable when the multiplicities $\lambda_i$ are large enough (for a precise definition of stability, see \cite{open}). In case of Dynkin quivers, any generic representation is stable.

Sub-generic representations of codimension $1$ are given by semi-invariants, and their geometry has been studied using Bernstein-Sato polynomials \cite{lol,bfun}.

\begin{proposition}\label{prop:step3}
Let $T$ be a stable generic representation. Then subgeneric representations in $\Rep(Q,\Dim T)$ are 1-step.
\end{proposition}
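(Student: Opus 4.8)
The natural strategy is to realize every subgeneric representation $V$ as a generic extension $M * N$ with $M, N$ generic, which by Proposition \ref{prop:step1} is equivalent to being 1-step. So the plan is: take an irreducible component $C = \overline{\orb}_V$ of $\Rep(Q,\alpha) \setminus \orb_T$ (where $\alpha = \Dim T$), and exhibit generic $M, N$ with $\alpha = \Dim M + \Dim N$ and $V = M * N$. First I would recall, from the work on stable representations in \cite{open}, how the codimension-one-in-the-orbit-degeneration structure organizes the components of the complement: each component corresponds to a minimal degeneration of $T$, and minimal degenerations of a generic (hence, in the Dynkin case, stable) representation are governed by the $\Ext$-quiver of $T$. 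Concretely, writing $T = T_1^{\lambda_1} \oplus \cdots \oplus T_r^{\lambda_r}$ with the $T_i$ pairwise non-isomorphic indecomposables, the fact that $T$ is generic forces $\Ext(T_i, T_j) = 0$ for all $i, j$; a minimal degeneration should then come from choosing a pair $i \neq j$ with $\Ext(T_i, T_j) \neq 0$ in the opposite direction — that is, replacing a summand $T_i \oplus T_j$ by the middle term $E$ of the (essentially unique) non-split extension $0 \to T_j \to E \to T_i \to 0$ — and keeping the remaining summands of $T$ fixed.

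The key step is to check that this $V$ — the representation obtained from $T$ by performing one such elementary "Ext-move" — is actually generic extension of a generic representation by a generic representation. I would set $N := T_j$ and let $M$ be the representation $T$ with one copy of $T_j$ deleted and one copy of $T_i$ deleted, then add back $T_i$; more precisely, $M := T_i \oplus \bigl(\bigoplus_{k} T_k^{\mu_k}\bigr)$ where $\mu_k = \lambda_k$ for $k \neq i,j$, $\mu_i = \lambda_i - 1$, $\mu_j = \lambda_j - 1$, and $N := T_j$. Both $M$ and $N$ are direct sums of the $T_k$'s with non-negative multiplicities, so $\Ext(M,M) = 0$ and $\Ext(N,N)=0$: they are generic. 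Then $V = M * N$ because the generic extension of $M$ by $N = T_j$ degenerates all extensions $0 \to T_j \to W \to M \to 0$; I must verify that $V$ as constructed (with the $T_i \oplus T_j$ replaced by the non-split $E$) is precisely this generic extension, which amounts to checking that $\dim \Ext(M, N)$ equals the codimension drop, i.e. that $\overline{\orb}_V$ has codimension one in $\Rep(Q,\alpha)$ — this is where I use that $V$ is a component of the complement of the (open) orbit $\orb_T$, combined with the codimension formula $\op{codim} \overline{\orb}_V = \dim \Ext(V,V)$ from the exact sequence (\ref{eq:ringel}).

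The main obstacle I anticipate is the first part: showing that \emph{every} irreducible component of $\Rep(Q,\alpha)\setminus\orb_T$ arises from such an elementary Ext-move, rather than from some more complicated degeneration. For general quivers this can fail, but the hypothesis that $T$ is stable is exactly what \cite{open} provides to guarantee that all components of the complement are themselves orbit closures, and the classification of minimal (codimension-one) degenerations — via Bongartz-type results and the fact that $\op{Ext}$-spaces between indecomposables of a Dynkin quiver are at most one-dimensional in the relevant situations — should pin down these components as the $M*T_j$ described above. So the proof reduces to: (i) invoke stability to get that components are orbit closures; (ii) use the minimal-degeneration theory to identify each such component with a single Ext-move $T \rightsquigarrow V$; (iii) observe that the resulting $V$ is visibly of the form (generic) $*$ (generic), and conclude via Proposition \ref{prop:step1}. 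I would present (ii) carefully, as it is the crux, and keep (i) and (iii) brief since they follow from cited results and the definitions.
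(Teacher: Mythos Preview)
Your plan has a genuine gap at step (ii): the description of subgeneric representations is incorrect, and this propagates through the rest of the argument.

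You correctly note that $\Ext(T_i,T_j)=0$ for all $i,j$, but then propose to build $V$ by replacing $T_i\oplus T_j$ with the middle term of a non-split extension $0\to T_j\to E\to T_i\to 0$. No such non-split extension exists, and even if it did, $E$ would be \emph{more} generic than $T_i\oplus T_j$, not less; the resulting representation would lie in $\overline{\orb}_T\setminus\orb_T$ only if it were a \emph{degeneration} of $T$, which goes the other way. Concretely, with your choice $N=T_j$ and $M=T$ minus one copy of $T_j$, one has $\Ext(M,N)=0$, so $M*N=M\oplus N=T$ itself---you never leave the open orbit.

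A second issue: you assume throughout that subgeneric orbit closures have codimension one. They need not. The classification in \cite{open} (which the paper invokes) shows that some irreducible components of the complement have higher codimension, and these involve \emph{new} indecomposables not among the $T_i$: in codimension one, $V=T'\oplus Z\oplus R$ with $T',Z,R$ generic and $\Ext(Z,T')=k$; in higher codimension, $V=T_i^{\lambda_i+1}\oplus X\oplus R$ or $V=T_i^{\lambda_i+1}\oplus Y\oplus R$ with a single one-dimensional $\Ext$ among the pieces. The paper's proof then applies Proposition~\ref{prop:step2} (not Proposition~\ref{prop:step1}) to these explicit decompositions $V=M\oplus N$ with $M,N$ generic and $\Ext(M,N)=0$. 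This is also why the result holds for arbitrary quivers: Proposition~\ref{prop:step1} is stated only for Dynkin $Q$, so routing through it would lose generality even if the combinatorics were repaired.
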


\begin{proof}
Assume first that $V$ is a subgeneric representation of codimension 1. By \cite{open}, $V$ can be written in the form $V=T'\oplus Z \oplus R$, with $T',Z,R$ generic and $\Ext(Z,T')=k$. Hence we can apply Proposition \ref{prop:step2} with $M=T'\oplus R$ and $N=Z$.

Now if $V$ is subgeneric of codimension larger than 1, then by \cite{open} it can be written as either $V=T_i^{\lambda_i+1}\oplus X \oplus R$, where the only non-trivial extension between $T_i,X,R$ is $\Ext(T_i,X)=k$, or $V=T_i^{\lambda_i+1}\oplus Y \oplus R$ with the only non-trivial extension group $\Ext(Y,T_i)=k$. We can apply Proposition \ref{prop:step2} in the former case with $M=X$ and $N=T_i^{\lambda_i+1}\oplus R$, while in the latter with $M=T_i^{\lambda_i+1}\oplus R$ and $N=Y$.
\end{proof}

\section{The geometric technique}
\label{sec:geom}

\vspace{0.05in}

Throughout we incorporate much of the notation used in \cite{kavita}. For more on the geometric technique, see \cite{jerzy}. 

Let $V\in \Rep(Q,\alpha)$ be a 1-step representation with
$$q: Z(Q,\beta \subset \alpha) \twoheadrightarrow  \overline{\orb}_V$$
for some dimension vector $\beta$ and let $\alpha=\beta+\gamma$.

We can identify the affine space $\Rep(Q,\alpha)$ with $\bigoplus_{a\in Q_1} V^*_{ta}\otimes V_{ha}$. Denote by $A$ the coordinate ring of $\Rep(Q,\alpha)$.

\vspace{0.05in}

For a vertex $x\in Q_0$, denote by $\mathcal{R}_x$ (resp. $\mathcal{Q}_x$) the tautological bundle (resp. factorbundle) on $\op{Gr}(\beta_x,\alpha_x)$. We view $\Rep(Q,\alpha)\times \op{Gr}(\beta,\alpha)$ as the total space of the trivial bundle $\mathcal{E}$ and $Z(Q,\beta \subset \alpha)$ as the total space of some subbundle $\mathcal{S}$ of $\mathcal{E}$. Let $\xi$ denote the dual of the factorbundle $\mathcal{E}/\mathcal{S}$. More explicitly, it is given by
$$\xi = \bigoplus_{a\in Q_1} \mathcal{R}_{ta}\otimes \mathcal{Q}_{ha}^*.$$

Applying \cite[Theorem 5.1.2]{jerzy}, we consider the complex $F_\bullet$ with terms
\begin{equation}\label{eq:basic}
F_i = \bigoplus_{j\geq 0} H^j (\operatorname{\op{Gr}(\beta,\alpha)}, \bigwedge^{i+j} \xi)\otimes A(-i-j).
\end{equation}

We use the following version of \cite[Theorem 5.1.3]{jerzy} without assuming that $q$ is birational.

\begin{theorem}\label{thm:rational} Using the notation above, we have the following:
\begin{itemize}
\item[(a)] If $F_i=0$ for all $i<0$, and the fiber $q^{-1}(V)$ is connected, then $F_\bullet$ is a finite minimal free resolution of the normalization of $\overline{\orb}_V$, and the normalization has rational singularities.
\item[(b)] If $F_i=0$ for all $i<0$ and $F_0=A$, then $\overline{\orb}_V$ is normal and it has rational singularities.
\end{itemize}
\end{theorem}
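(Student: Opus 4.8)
The plan is to feed the situation into the Kempf--Lascoux--Weyman ``Basic Theorem'' \cite[Theorem 5.1.2]{jerzy} and then analyse the sheaf $q_*\mathcal{O}_Z$ directly; the point is that in \cite{jerzy} birationality of $q$ enters only in the conclusion of the subsequent Theorem 5.1.3, never in the construction of the complex (\ref{eq:basic}). Recall that $Z=Z(Q,\beta\subset\alpha)$ is the total space of a vector subbundle of the trivial bundle $\mathcal{E}$ over $\op{Gr}(\beta,\alpha)$, hence smooth and irreducible, and that $q$ is proper, being the restriction to $Z$ of the projection $\Rep(Q,\alpha)\times\op{Gr}(\beta,\alpha)\to\Rep(Q,\alpha)$. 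By \cite[Theorem 5.1.2]{jerzy}, $F_\bullet$ is a bounded minimal complex of graded free $A$-modules with $H_{-i}(F_\bullet)\cong R^iq_*\mathcal{O}_Z$ for every $i\in\integ$. Since $R^iq_*\mathcal{O}_Z=0$ for $i<0$, already $H_i(F_\bullet)=0$ for all $i>0$; so the hypothesis $F_i=0$ for $i<0$ forces $R^iq_*\mathcal{O}_Z=0$ for all $i>0$ and makes $F_\bullet$ a finite minimal free resolution of the $A$-module $M:=q_*\mathcal{O}_Z=\Gamma(Z,\mathcal{O}_Z)$, with $Rq_*\mathcal{O}_Z\cong M$ in degree $0$.

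For part (a) I would pass to the Stein factorization $q=q_2\circ q_1$. As $\overline{\orb}_V$ is affine and $q_*\mathcal{O}_Z$ is coherent, $M$ is a finite $A/I(\overline{\orb}_V)$-algebra and $W:=\op{Spec} M$; here $q_1\colon Z\to W$ is proper with $q_{1*}\mathcal{O}_Z=\mathcal{O}_W$ and is surjective (else $\mathcal{O}_W=q_{1*}\mathcal{O}_Z$ would not be supported everywhere), so $W$ is irreducible, while $q_2\colon W\to\overline{\orb}_V$ is finite. The claim is that $W$ is the normalization of $\overline{\orb}_V$ and has rational singularities. Normality of $W$ follows from $q_{1*}\mathcal{O}_Z=\mathcal{O}_W$ and normality of the smooth $Z$: a function integral over $\mathcal{O}_W$ on some open set pulls back to a function integral over $\mathcal{O}_Z$, hence regular on $Z$, hence (projection formula) already regular on $W$. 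For birationality of $q_2$: the hypothesis that $q^{-1}(V)$ is connected, combined with $\GL(\alpha)$-equivariance of $q$, shows that $q^{-1}(v)$ is connected for all $v\in\orb_V$, so (using surjectivity of $q_1$) $q_2^{-1}(v)$ is a single point for every such $v$; since we work in characteristic zero and $W,\overline{\orb}_V$ are varieties, $q_2$ is \'etale over a dense open of $\overline{\orb}_V$, which meets $\orb_V$, forcing $\deg q_2=1$. A finite birational morphism onto $\overline{\orb}_V$ from a normal variety is its normalization. Finally $q_2$ is affine, so $R^{>0}q_*\mathcal{O}_Z=0$ together with the Leray spectral sequence gives $R^{>0}q_{1*}\mathcal{O}_Z=0$; thus $Rq_{1*}\mathcal{O}_Z\cong\mathcal{O}_W$ with $q_1$ proper and $Z$ smooth, which is the standard criterion forcing $W$ to have rational singularities. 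Hence $F_\bullet$ resolves the coordinate ring $M$ of the normalization, and the normalization has rational singularities.

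For part (b), $F_0=A$ means the augmentation $A=F_0\twoheadrightarrow H_0(F_\bullet)=M$ presents $M$ as a cyclic $A$-module; but this augmentation is precisely the pullback-of-functions map $q^\#\colon A\to q_*\mathcal{O}_Z$, which factors through $A\to A/I(\overline{\orb}_V)$ and whose induced map $A/I(\overline{\orb}_V)\to q_*\mathcal{O}_Z$ is injective, since $q$ is dominant and $\overline{\orb}_V$ is reduced. Therefore $M=A/I(\overline{\orb}_V)=\mathcal{O}_{\overline{\orb}_V}$ and $Rq_*\mathcal{O}_Z\cong\mathcal{O}_{\overline{\orb}_V}$; by the integral-closure argument of part (a) this makes $\overline{\orb}_V$ normal, and by the rational-singularities criterion it has rational singularities. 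Consequently $F_\bullet$ is the minimal free resolution of $A/I(\overline{\orb}_V)$ over $A$.

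Apart from the homological bookkeeping around \cite[Theorem 5.1.2]{jerzy}, the real work is the Stein-factorization step of (a): one must see that it is connectedness of the \emph{single} fiber $q^{-1}(V)$, and not birationality of $q$, that pins $W$ down as the normalization --- which is exactly where both $\GL(\alpha)$-equivariance (to propagate connectedness over all of $\orb_V$) and characteristic zero (to upgrade ``every closed fiber of $q_2$ is one point'' to ``$q_2$ is birational'') are essential.
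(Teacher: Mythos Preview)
Your argument is correct and follows essentially the same route as the paper: Stein-factorize $q$, identify the intermediate affine scheme with the normalization via connectedness of the generic fiber, and then invoke the criterion (Kov\'acs) that $Rq_{1*}\mathcal{O}_Z\cong\mathcal{O}_W$ with $Z$ smooth forces $W$ to have rational singularities. You spell out more explicitly than the paper why connectedness of the single fiber $q^{-1}(V)$ suffices (equivariance propagates it over $\orb_V$, and characteristic zero turns degree-one fibers into birationality), and in (b) you unpack the identification $q_*\mathcal{O}_Z=A/I(\overline{\orb}_V)$ by the injectivity--surjectivity argument rather than the paper's comparison of prime ideals, but these are the same ideas.
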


\begin{proof}
\begin{itemize}
\item[(a)] Put $Z:= Z(Q,\beta\subset \alpha)$.  We factor $q$ through the normalization $Y$ of $\overline{\orb}$, then apply Stein factorization to obtain $Z\xrightarrow{f} Y'\to Y$, where $Y'$ is an integral scheme with $f_* \mathcal{O}_Z = \mathcal{O}_{Y'}$. The map $Y'\to Y$ is birational since the general fiber of $q$ is connected. This shows that $Y'\simeq Y$. The statement about minimal free resolution follows as in \cite[Theorem 5.1.3]{jerzy}. and the statement about rational singularities follows from \cite{kovacs}.
\item[(b)] Let $\overline{\orb}_V= \op{Spec} A/I$, where $I$ is a prime ideal. Applying Stein factorization we have $Z\to X \to \op{Spec} A/I$, where $X$ is an integral affine scheme. Since $F_0=A$ and  $F_i=0$ for all $i<0$,  \cite[Theorem 5.1.3]{jerzy} implies that $X=\op{Spec} A/J$ for some prime ideal $J$. But $\op{Spec} A/J \to \op{Spec} A/I$ is surjective, so it must be an isomorphism. The rest follows as in part (a).
\end{itemize}
\end{proof}

A partition (with $n$ parts) $\lambda = (\lambda_1,\lambda_2,\dots,\lambda_n)$ is a non-increasing sequence of non-negative integers. For a partition $\lambda$ we associate its corresponding Young diagram that consists of $\lambda_i$ boxes in the $i$th row. We denote the number of boxes by $|\lambda|:= \lambda_1+\dots+\lambda_n$. We denote by $u_\lambda$ the size of the Durfee square of $\lambda$, that is, the biggest square fitting inside of the Young diagram of $\lambda$. Its defining property is $\lambda_{u_\lambda}\geq u_\lambda$ and $\lambda_{u_\lambda+1}\leq u_\lambda$.

Let $\lambda^+$ be the partition $(\lambda_1-u_{\lambda},\lambda_2-u_{\lambda},\dots,\lambda_{u_\lambda}-u_{\lambda})$ and $\lambda^{-}$ the partition $(\lambda_{u_\lambda+1},\dots,\lambda_n)$. Hence we can view the Young diagram of $\lambda$ as the composite of the $3$ parts $\lambda^+,\lambda^{-}$ and a $u_\lambda\times u_\lambda$ square:
\[ \hspace{-0.3in} \lambda: \hspace{0.2in}
\begin{aligned}
\begin{tikzpicture}
\draw (0,0.9)--(0,0)--(0.3,0)--(0.3,0.3)--(0.9,0.3)--(0.9,0.6)--(1.2,0.6)--(1.2,0.9);
\node at (0.45,0.6) {$\lambda^-$};
\draw [thick] (0,0.9)--(0,2.1)--(1.2,2.1)--(1.2,0.9)--(0,0.9);
\node at (0.6,1.5) {$u_\lambda \! \times \! u_\lambda$};
\draw(1.2,0.9)--(1.5,0.9)--(1.5,1.2)--(2.1,1.2)--(2.1,1.8)--(2.7,1.8)--(2.7,2.1)--(1.2,2.1);
\node at (1.7,1.5) {$\lambda^+$};
\end{tikzpicture}
\end{aligned}
\]

The conjugate partition $\lambda'=(\lambda_1',\dots, \lambda_m')$ is a partition with $\lambda'_i$ being the number of boxes in the $i$th column of the Young diagram of $\lambda$. Note that $u_{\lambda}=u_{\lambda'}$.

Denote by $-\lambda$ the non-increasing sequence of non-positive integers $(-\lambda_n,-\lambda_{n-1},\dots, -\lambda_1)$.

A weight (with $n$ parts) is any sequence of integers  $\delta:=(\delta_1, \delta_2,\dots ,\delta_n)$. For a weight $\delta$ and a number $u\in\integ$ we denote by $\delta+u$ the weight $(\delta_1+u,\delta_2+u,\dots, \delta_n+u)$. 

We consider the action of the symmetric group $\Sigma_n$ on weights defined as follows: a transposition $\sigma_i=(i,i+1)$ acts according to the exchange rule
$$\sigma_i \cdot \delta = (\delta_1,\dots ,\delta_{i-1},\delta_{i+1}-1,\delta_{i}+1,\delta_{i+2},\dots,\delta_n).$$ 
Let $N(\delta)$ be length of the (unique) permutation $\sigma\in \Sigma_n$ such that the sequence $\sigma \cdot \delta$ is non-increasing, if there exists such a permutation, otherwise put $N(\delta):=-\infty$. In other words, $N(\delta)$ is the minimal number of exchanges applied to $\delta$ that turn it non-increasing. Clearly, we have $N(\delta)=N(\delta+u)$, for any $u\in\integ$.

\vspace{0.1in}

For any partition $\lambda$, we denote by $S_\lambda$ the corresponding Schur functor. Now for $1\leq t \leq \dim \xi$, we decompose $\bigwedge^{t} \xi$ using Cauchy's formula (see \cite{jerzy}) as in \cite{kavita}:
$$\bigwedge^{t} \xi = \bigoplus_{\sum_{a\in Q_1}\!\! k_a = t}\, \bigotimes_{a\in Q_1} \bigwedge^{k_a} \mathcal{R}_{ta}\otimes \mathcal{Q}_{ha}^*=$$
$$= \bigoplus_{\sum_{a\in Q_1}\!\! |\lambda_a| = t}\, \bigotimes_{a\in Q_1} S_{\lambda(a)} \mathcal{R}_{ta}\otimes S_{\lambda(a)'} \mathcal{Q}_{ha}^*=$$
\begin{equation}\label{eq:cauchy}
=\bigoplus_{\sum_{a\in Q_1} \!\! |\lambda_a| = t}\, \bigotimes_{x\in Q_0}\left[ \bigotimes_{a\in Q_1 : ta=x} S_{\lambda(a)}\mathcal{R}_{x}\otimes \bigotimes_{b\in Q_1 : hb=x} S_{\lambda(b)'} \mathcal{Q}_{x}^*\right].
\end{equation}

For any arrow $a\in Q_1$, we choose a partition $\lambda(a)$ with its Young diagram having at most $\beta_{ta}$ rows and at most $\gamma_{ha}$ columns. Now to any vertex $x\in Q_0$, we associate two partitions $\mu(x)$ and $\nu(x)$ as follows. Let $a_1,\dots , a_k$ be all outgoing arrows and $b_1,\dots, b_l$ all the incoming arrows at $x$. Then choose $\mu(x)$ (resp. $\nu(x)$) a partition corresponding to any Young diagram occurring in the Littlewood-Richardson product of $\lambda(a_1),\dots , \lambda(a_k)$ (resp. $\lambda(b_1)',\dots,\lambda(b_l)'$) with at most $\beta_x$ rows (resp. $\gamma_x$ rows). Denote the collection of all these partitions by
\begin{equation}
\label{eq:coll}
\underline{\lambda}:= ( (\lambda(a))_{a\in Q_1},(\mu(x))_{x\in Q_0},(\nu(x))_{x\in Q_0}).
\end{equation}

To compute the cohomology of a factor $S_{\mu(x)} \mathcal{R}_x \otimes S_{\nu(x)} \mathcal{Q}_x^*$ of a summand in (\ref{eq:cauchy}), we apply Bott's Theorem. Namely, consider the corresponding weight
$$\delta(x):=(-\nu(x),\mu(x)),$$
where $-\nu(x)$ has $\gamma_x$ parts and $\mu(x)$ has $\beta_x$ parts (appending with zeroes, if necessary). We have (see \cite[Corollary 4.1.7, Corollary 4.1.9]{jerzy}):

\begin{theorem}\label{thm:bott}
The cohomologies $H^i(\op{Gr}(\beta_x,\alpha_x), S_{\mu(x)} \mathcal{R}_x \otimes S_{\nu(x)} \mathcal{Q}_x^*)$ vanish when $i\neq N(\delta(x))$, and 
$$H^{N(\delta(x))}(\op{Gr}(\beta_x,\alpha_x), S_{\mu(x)} \mathcal{R}_x \otimes S_{\nu(x)} \mathcal{Q}_x^*)=S_{\tau} V_x,$$
where $\tau$ is the non-increasing sequence obtained from $\delta(x)$.
\end{theorem}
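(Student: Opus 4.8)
The plan is to identify $S_{\mu(x)}\mathcal{R}_x\otimes S_{\nu(x)}\mathcal{Q}_x^*$ with a homogeneous vector bundle on $\op{Gr}(\beta_x,\alpha_x)=\GL(V_x)/P$, where $P$ is the parabolic subgroup stabilizing a fixed $\beta_x$-dimensional subspace of $V_x$, and then quote Bott's theorem in the form of \cite[Corollary 4.1.7, Corollary 4.1.9]{jerzy}. Concretely, with a suitable ordering of a basis of $V_x$ the Levi factor of $P$ is $L=\GL(\gamma_x)\times\GL(\beta_x)$, the tautological factorbundle $\mathcal{Q}_x$ is induced from the standard representation of the $\GL(\gamma_x)$-factor and $\mathcal{R}_x$ from the standard representation of the $\GL(\beta_x)$-factor; using $S_{\nu(x)}\mathcal{Q}_x^*\cong S_{-\nu(x)}\mathcal{Q}_x$, the bundle $S_{\mu(x)}\mathcal{R}_x\otimes S_{\nu(x)}\mathcal{Q}_x^*$ is then induced from the irreducible $L$-representation whose character, read in the weight lattice of the maximal torus of $\GL(V_x)$, is exactly $\delta(x)=(-\nu(x),\mu(x))$.

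With this in place I would recall Bott's algorithm for $G/P$. Put $n=\alpha_x$ and $\rho=(n-1,n-2,\dots,1,0)$. Then: if $\delta(x)+\rho$ has two equal entries, all cohomology of the induced bundle vanishes; otherwise there is a unique $w\in\Sigma_n$ such that $w(\delta(x)+\rho)$ is strictly decreasing, and $H^i(\op{Gr}(\beta_x,\alpha_x),S_{\mu(x)}\mathcal{R}_x\otimes S_{\nu(x)}\mathcal{Q}_x^*)$ vanishes for $i\neq\ell(w)$ while $H^{\ell(w)}=S_\tau V_x$ with $\tau:=w(\delta(x)+\rho)-\rho$. This is precisely \cite[Chapter 4]{jerzy}.

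It then remains to match this Weyl-group data with the bubble-sorting quantity $N(\delta(x))$ of the statement. The key observation is that the exchange rule $\sigma_i\cdot\delta=(\dots,\delta_{i+1}-1,\delta_i+1,\dots)$ is exactly the $\rho$-shifted action of the simple transposition, i.e. $\sigma_i\cdot\delta=\sigma_i(\delta+\rho)-\rho$, which is immediate from $\rho_i-\rho_{i+1}=1$; hence applying a word of exchanges to $\delta(x)$ is the same as applying that word of adjacent transpositions to $\delta(x)+\rho$ in the usual sense. Now if $\delta(x)+\rho$ has a repeated entry, then in any $\sigma(\delta(x)+\rho)$ two equal entries can only be brought into a non-increasing arrangement by sitting at adjacent positions $j$, $j+1$, in which case $(\sigma\cdot\delta(x))_{j+1}=(\sigma\cdot\delta(x))_j+1$, so $\sigma\cdot\delta(x)$ can never be non-increasing; conversely, if all entries of $\delta(x)+\rho$ are distinct there is a unique sorting permutation $w$, it is the unique $\sigma$ with $\sigma\cdot\delta(x)$ non-increasing, and the minimal number of adjacent transpositions realizing it equals its number of inversions $\ell(w)$. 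Thus $N(\delta(x))=-\infty$ exactly in the vanishing (singular) case, $N(\delta(x))=\ell(w)$ otherwise, and the non-increasing sequence obtained from $\delta(x)$ is $w(\delta(x)+\rho)-\rho=\tau$; combining with Bott's theorem gives the claim.

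I expect the only real obstacle to be the conventions in the first paragraph: pinning down the ordering of the flag (sub versus quotient block) together with the sign and reversal in $-\nu(x)$, so that the induced weight is literally $(-\nu(x),\mu(x))$ rather than $(\mu(x),-\nu(x))$ or a reversal thereof. Everything downstream is a routine transcription of \cite[Chapter 4]{jerzy}, the one point worth stating carefully being the equivalence ``$N(\delta(x))=-\infty$'' $\Leftrightarrow$ ``$\delta(x)+\rho$ is singular'', which is the computation with positions $j,j+1$ above.
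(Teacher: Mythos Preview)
Your argument is correct. In the paper this theorem is not proved at all: it is simply stated with a reference to \cite[Corollary~4.1.7, Corollary~4.1.9]{jerzy}, so there is no ``paper's own proof'' to compare against beyond that citation. What you have written is precisely the unpacking of those corollaries into the paper's notation: identifying $S_{\mu(x)}\mathcal{R}_x\otimes S_{\nu(x)}\mathcal{Q}_x^*$ as the bundle induced from the irreducible $L$-representation of weight $\delta(x)$, and then checking that the exchange rule $\sigma_i\cdot\delta$ is the $\rho$-shifted action so that $N(\delta(x))$ coincides with the length $\ell(w)$ of the sorting permutation (and equals $-\infty$ exactly in the singular case). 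The caveat you flag about conventions---which block of the Levi carries $\mathcal{R}_x$ versus $\mathcal{Q}_x$, and hence whether the weight reads $(-\nu(x),\mu(x))$ or a reordering---is the only place where care is needed, and you have identified it correctly.
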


\section{Minimal free resolutions of 1-step orbit closures}\label{sec:main}

\vspace{0.05in}

We start with some preliminary lemmas.

\begin{lemma}
\label{lem:first}
Let $\lambda,\mu$ be two partitions and $n$ the number of parts of $\mu$. Then $N(\lambda, \mu+n)\leq |\mu|$. 
\end{lemma}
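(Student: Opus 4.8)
The plan is to understand exactly how the sorting operator $N$ acts on the weight $(\lambda,\mu+n)$, where $\lambda=(\lambda_1,\dots)$ is non-increasing and $\mu=(\mu_1\geq\dots\geq\mu_n\geq 0)$, so that $\mu+n=(\mu_1+n,\dots,\mu_n+n)$. First I would recall that $N(\delta)$ counts the minimal number of adjacent exchanges (each of the form $\sigma_i\cdot\delta$, which swaps $\delta_i,\delta_{i+1}$ with a $\pm1$ correction) needed to make $\delta$ non-increasing, and that by the standard combinatorics behind Bott's algorithm this equals the number of pairs $(i,j)$ with $i<j$ that are ``out of order'' in a suitable sense — more precisely, $N(\delta)$ is the number of inversions of the sequence $\delta_i - i$ (or $\delta_i + (n-i)$, depending on convention), since the exchange rule is designed precisely so that $\delta_i - i$ behaves like an ordinary value being bubble-sorted. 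Concretely, writing $\rho=(0,-1,-2,\dots)$, the quantity $N(\delta)$ is the number of inversions of the ordinary integer sequence $\delta+\rho$, and $\sigma\cdot\delta$ is non-increasing iff $(\sigma\cdot\delta)+\rho$ is strictly decreasing.

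With this reformulation, I would examine the sequence $(\lambda,\mu+n)$ of total length $m+n$ where $m$ is the number of parts of $\lambda$. Within the $\lambda$-block and within the $(\mu+n)$-block the entries are already non-increasing, so all inversions of $(\lambda,\mu+n)+\rho$ come from cross pairs: an index $i$ in the $\lambda$-block (position $i$, value $\lambda_i - (i-1)$) and an index in the $\mu$-block (position $m+j$, value $\mu_j + n - (m+j-1)$). The shift by $+n$ on the $\mu$-part is exactly what can force such cross-inversions; the key numerical observation is that for the $j$-th entry of the $\mu$-block, the number of $\lambda$-entries it must ``hop over'' is at most $\mu_j$. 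Indeed, once we pass position $m$, the added $+n$ is compensated: $\mu_j + n - (m+j-1)$ versus $\lambda_i-(i-1)$; bounding the count of $i$ with $\lambda_i - (i-1) < \mu_j + n - (m+j-1)$, i.e. $\lambda_i - i < \mu_j + n - m - j$, one sees that because $\lambda_i-i$ is strictly decreasing in $i$ while the relevant threshold grows, the number of bad $i$ for fixed $j$ is at most $\mu_j$ (here the precise bookkeeping with the $n$ parts of $\mu$ and the $+n$ shift is what makes the bound come out to exactly $\mu_j$ and not something larger). Summing over $j=1,\dots,n$ gives $N(\lambda,\mu+n)\leq \sum_j \mu_j = |\mu|$.

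An alternative, perhaps cleaner route I would consider is induction on $|\mu|$ (or on $n$). If $\mu=0$ then $(\lambda,\mu+n)=(\lambda,n,\dots,n)$ is already non-increasing if $\lambda$ has all parts $\geq n$ — but that need not hold, so even the base case needs the inversion description; hence I lean toward the direct inversion-counting argument. If instead one inducts, the natural step is to peel off the last part $\mu_n$: note $(\mu_1,\dots,\mu_{n-1})+(n-1)$ relates to $(\mu_1,\dots,\mu_n)+n$ by removing the final entry and decrementing the shift, and one tracks how many exchanges the final entry $\mu_n+n$ (in position $m+n$) needs to reach its sorted spot — again at most $\mu_n$ of the preceding entries exceed it after accounting for the staircase $\rho$. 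Then $N(\lambda,\mu+n)\leq N(\lambda,(\mu_1,\dots,\mu_{n-1})+n') + \mu_n$ for the appropriately shifted shorter weight, and the induction hypothesis closes the argument.

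The main obstacle I anticipate is purely bookkeeping: pinning down the exact relationship between the exchange rule $\sigma_i\cdot\delta$ and ordinary inversions of $\delta+\rho$, and then carefully verifying that the per-row contribution is bounded by $\mu_j$ rather than by something weaker like $\mu_j + (\text{error})$. One must be careful that the $+n$ shift is applied to a $\mu$ with exactly $n$ parts (the lemma's hypothesis), since this is what balances the staircase offset between the two blocks; if $\mu$ had a different number of parts the bound would change. Once the inversion interpretation is set up correctly, the inequality $N(\lambda,\mu+n)\leq|\mu|$ should follow by a short counting estimate, and the only real work is stating that interpretation precisely and invoking it cleanly (possibly citing the relevant lemma in \cite{jerzy} on Bott's algorithm for the fact that $N$ equals the inversion number of $\delta+\rho$).
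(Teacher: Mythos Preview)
Your inversion-counting approach is correct and genuinely different from the paper's argument. The paper proceeds by induction on $n$: it finds the largest $k$ with $\lambda_{m-k+1}+k\leq\mu_n$, performs $n\cdot k$ exchanges to slide the entire $\mu$-block past the last $k$ entries of $\lambda$ (so that those $k+1$ entries land in final position), and then reduces to a shorter $\mu$ via the identity $N(\lambda,\mu+n)=n\cdot k + N(\lambda_1,\dots,\lambda_{m-k},(\mu_1-\mu_n)+{(n-1)},\dots,(\mu_{n-1}-\mu_n)+{(n-1)})$, yielding the bound $|\mu|+n(k-\mu_n)\leq|\mu|$. Your route via inversions of $\delta+\rho$ is more conceptual and avoids the explicit exchange bookkeeping; the one point you gloss over is \emph{why} the number of cross-inversions at a fixed $j$ is at most $\mu_j$. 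The clean reason is this: set $a_i=\lambda_i+(m-i)$ and $b_j=\mu_j+(n-j)$, so both are strictly decreasing sequences of non-negative integers, and (assuming $N\neq-\infty$) the $a_i$ are disjoint from the $b_j$. Then the $a_i$ with $a_i<b_j$ lie in $\{0,1,\dots,b_j-1\}\setminus\{b_{j+1},\dots,b_n\}$, a set of size $b_j-(n-j)=\mu_j$. Summing over $j$ gives $N(\lambda,\mu+n)\leq|\mu|$. Your phrase ``because $\lambda_i-i$ is strictly decreasing while the relevant threshold grows'' does not by itself give this bound; it is the pigeonhole against the $b_{j'}$ with $j'>j$ that does the work. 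With that sentence added, your argument is shorter and arguably cleaner than the paper's induction; the paper's approach, on the other hand, needs no appeal to the inversion interpretation of $N$ and stays entirely within the exchange calculus.
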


\begin{proof}
We can assume WLOG that $N(\lambda, \mu+n)\geq 0$. We proceed by induction on $n$. Write $\lambda=(\lambda_1,\dots,\lambda_m)$ and pick $k\in \nat$ to be the largest number with the property $\lambda_{m-k+1}+k\leq \mu_n$ (here set $\lambda_{m+1}:=0$). Then we must have $\lambda_{m-k}+k-1\geq \mu_n$. Applying $n\cdot k$ exchanges we arrive at the sequence
$$(\lambda_1,\dots, \lambda_{m-k},\mu_1+n-k,\mu_2+n-k,\dots, \mu_n+n-k, \lambda_{m-k+1}+n, \lambda_{m-k+2}+n,\dots, \lambda_{m}+n),$$
and we see that the last $k+1$ elements of the sequence are in their final places. We have the following
$$N(\lambda, \mu+n)=n\cdot k + N(\lambda_1,\dots, \lambda_{m-k},\mu_1+n-k,\dots, \mu_n+n-k, \lambda_{m-k+1}+n,\dots, \lambda_{m}+n)=$$
$$=n\cdot k + N(\lambda_1,\dots, \lambda_{m-k},\mu_1+n-k,\dots, \mu_{n-1}+n-k)= $$
$$=n\cdot k + N(\lambda_1+k-1-\mu_n,\dots, \lambda_{m-k}+k-1-\mu_n,\mu_1-\mu_n+n-1,\dots, \mu_{n-1}-\mu_n+n-1).$$
Now the induction hypothesis is satisfied and we obtain 
$$N(\lambda_1+k-1-\mu_n,\dots, \lambda_{m-k}+k-1-\mu_n,\mu_1-\mu_n+n-1,\dots, \mu_{n-1}-\mu_n+n-1)\leq |\mu|-n\cdot \mu_n.$$
Hence we obtain 
$$N(\lambda, \mu+n)\leq |\mu|+n(k-\mu_n) \leq |\mu|.$$
\end{proof}

%\begin{remark}
%\label{rem:equal1}
%We see from the proof that equality holds in the lemma above iff $\lambda_{m-\mu_{n-%k+1}}=\dots=\lambda_{m-\mu_{n-k}+1}=k$ for all $k=0,\dots, n-1$, and either $m=\mu_1$ or $\lambda_{m-\mu_1}\geq n$. Moreover, after performing all the exchanges we obtain the partition $(\lambda_1,\dots,\lambda_{m-\mu_1},n,n,\dots, n)$
%\end{remark}

\begin{lemma}
\label{lem:sec}
For two partitions $\lambda, \mu$ we have $N(-\lambda,\mu)\leq |\lambda^+|+ |\mu^+| + u_\lambda \cdot u_\mu$.
\end{lemma}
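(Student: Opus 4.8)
The plan is to convert $N(-\lambda,\mu)$ into an explicit lattice‑point count and then control it using the Durfee squares of $\lambda$ and $\mu$.

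First I would reformulate $N$. For a weight $\delta=(\delta_1,\dots,\delta_L)$ the exchange action becomes the usual $\Sigma_L$‑action on the shifted sequence $\epsilon_i:=\delta_i+(L-i)$, so that when finite $N(\delta)$ equals the number of pairs $i<i'$ with $\epsilon_i<\epsilon_{i'}$, while $N(\delta)=-\infty$ exactly when two of the $\epsilon_i$ coincide. Applying this to $\delta=(-\lambda,\mu)$, written with $\lambda$ having $n$ parts and $\mu$ having $m$ parts: inside each of the two blocks of $\delta$ the $\epsilon$‑values are strictly decreasing, so any contributing pair has its first index in the $-\lambda$‑block and its second in the $\mu$‑block, and substituting into $\epsilon_i<\epsilon_{i'}$ turns this into $\lambda_p+\mu_j\ge p+j$. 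Hence, whenever $N(-\lambda,\mu)$ is finite,
\[
N(-\lambda,\mu)=\#\bigl\{(p,j)\in\{1,\dots,n\}\times\{1,\dots,m\}\ :\ \lambda_p+\mu_j\ge p+j\bigr\},
\]
and finiteness is precisely the statement that $\lambda_p+\mu_j\neq p+j-1$ for all such $(p,j)$.

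Next I would split this count by the Durfee squares, setting $u=u_\lambda$ and $v=u_\mu$. Since $\lambda_p\ge p$ iff $p\le u$ and $\mu_j\ge j$ iff $j\le v$, the region $p\le u,\ j\le v$ contributes exactly $uv$ solutions, while the region $p>u,\ j>v$ contributes none (there $\lambda_p<p$ and $\mu_j<j$). Thus $N(-\lambda,\mu)=u_\lambda u_\mu+B+C$, where $B$ counts the solutions with $p\le u$ and $v<j\le m$, and $C$ those with $u<p\le n$ and $j\le v$; it remains to prove $B+C\le|\lambda^+|+|\mu^+|$.

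For this last step I would pass to Frobenius coordinates $\lambda=(a_1,\dots,a_u\mid b_1,\dots,b_u)$ and $\mu=(c_1,\dots,c_v\mid d_1,\dots,d_v)$, so that $|\lambda^+|=\sum_i a_i-\binom u2$ and $|\mu^+|=\sum_j c_j-\binom v2$. Counting the admissible $j$ for each fixed $p$ in $B$ gives $B\le\sum_i a_i-\#\{(i,j):a_i>d_j\}$ together with the range bound $B\le u(m-v)$, and dually $C\le\sum_j c_j-\#\{(i,j):c_j>b_i\}$ and $C\le v(n-u)$ (all pair‑counts over $1\le i\le u$, $1\le j\le v$). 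The essential input is that finiteness of $N(-\lambda,\mu)$ forces, for each $p\le u$, that $a_p\ge m$ or $a_p\in\{d_1,\dots,d_v\}$, and dually for each $c_j$; feeding these coincidences into the estimates for $B$ and $C$ and combining them with the two range bounds produces $B+C\le|\lambda^+|+|\mu^+|$. The hardest part is precisely this balancing act: neither bound for $B$ suffices alone — the Frobenius‑coordinate bound overshoots by about $\binom u2$, and $B\le u(m-v)$ is useless when $m$ is large — so one must interleave them, and the argument splits into cases according to how $u_\lambda$ and $u_\mu$ compare and how many arm coordinates exceed the relevant numbers of parts; what makes it go through is that the configurations for which the bound would otherwise fail are exactly those in which some $\lambda_p+\mu_j$ equals $p+j-1$, i.e.\ those with $N(-\lambda,\mu)=-\infty$.
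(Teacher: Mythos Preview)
Your reformulation of $N(-\lambda,\mu)$ as the inversion count $\#\{(p,j):\lambda_p+\mu_j\ge p+j\}$ is correct, and your splitting $N(-\lambda,\mu)=u_\lambda u_\mu+B+C$ is exactly the decomposition the paper obtains: after the $u_\lambda u_\mu$ initial exchanges the paper's ``left half'' and ``right half'' have $N$-values equal to your $C$ and $B$ respectively. So up to this point the two arguments coincide, just phrased differently.

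The divergence---and the gap---is in the last step. You assert that ``neither bound for $B$ suffices alone'' and that a joint case analysis on $B+C$ is required; this is not so. The paper bounds the two halves \emph{separately}, obtaining $C\le|\mu^+|$ and $B\le|\lambda^+|$ as two independent applications of the preceding Lemma~\ref{lem:first}. In your language this is also visible: the finiteness constraint you correctly extract (for each $p\le u$, either $a_p\ge m$ or $a_p\in\{d_1,\dots,d_v\}$) already forces $B_p\le\lambda_p-u$ for every individual $p$. Indeed, writing $p_0$ for the smallest index with $a_{p_0}<m$, the indices $p\ge p_0$ are mapped injectively and increasingly into $\{1,\dots,v\}$ via $a_p=d_{k(p)}$, so $k(p)\le v-(u-p)$ and hence $B_p=a_p-(v-k(p))\le a_p-(u-p)=\lambda_p-u$; for $p<p_0$ one has $a_p\ge m+(p_0-1-p)$ and $p_0\ge u-v+1$, whence $B_p=m-v\le\lambda_p-u$ as well. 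Summing gives $B\le|\lambda^+|$ outright, and $C\le|\mu^+|$ by symmetry. No interaction between $B$ and $C$ is needed.

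So your outline would work, but the ``balancing act'' you describe is both unexecuted and unnecessary: the per-index refinement above (which is precisely the content of Lemma~\ref{lem:first}) already closes the argument. What the paper buys by isolating Lemma~\ref{lem:first} is that this inductive step is done once, cleanly, rather than being rediscovered inside the Frobenius-coordinate bookkeeping.
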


\begin{proof}
Put $\lambda=(\lambda_1,\dots,\lambda_m)$, $\mu=(\mu_1,\dots,\mu_n)$ and $u:=u_\lambda,v:=u_\mu$. After $u\cdot v$ exchanges we get
$$ N(-\lambda,\mu)=u \cdot v+N(-\lambda_m,\dots, -\lambda_{u+1},\mu_1-u,\dots, \mu_{v}-u,-\lambda_{u}+v, \dots, -\lambda_1+v, \mu_{v+1},\dots, \mu_n)=$$
$$=u \cdot v+N(-\lambda_m,\dots, -\lambda_{u+1},\mu_1-u,\dots, \mu_{v}-u)+N(-\lambda_{u}+v, \dots, -\lambda_1+v, \mu_{v+1},\dots, \mu_n).$$
Now using Lemma \ref{lem:first} we have
$$N(-\lambda_m,\dots, -\lambda_{u+1},\mu_1-u,\dots, \mu_{v}-u) = N(-\lambda_m+u,\dots, -\lambda_{u+1}+u,\mu_1,\dots, \mu_{v})\leq |\mu^+|.$$
Similarly, we have $N(-\lambda_{u}+v, \dots, -\lambda_1+v, \mu_{v+1},\dots, \mu_n)\leq |\lambda^+|$, hence the conclusion.
\end{proof}

%\begin{remark}
%\label{rem:equal2}
%Together with Remark \ref{rem:equal1} we see that equality holds in the lemma above iff $\lambda^{-\prime}_u=\mu^+_v,\lambda^{-\prime}_{u-1}=\mu^+_{v-1},\dots,\lambda^{-\prime}_{u-v+1}=\mu^+_1$ and $\mu^{-\prime}_v=\lambda^{+}_u,\mu^{-\prime}_{v-1}=\lambda^{+}_{u-1},\dots,\mu^{-\prime}_{v-u+1}=\lambda^+_1$. Here for a non-positive integer $a\leq 0$ we set $\lambda^{-\prime}_a := m-u$ (resp. $\mu^{-\prime}_a := n-v$), the number of parts of $\lambda^-$ (resp. $\mu^-$). Moreover, %after performing all the exchanges we obtain the partition
%$$(-\lambda^-_{m-u},\dots, -\lambda^-_{\mu^+_1+1},v-u,v-u,\dots, v-u, \mu^-_{\lambda^+_1+1}\dots,\mu^{-}_{n-v}).$$
%\end{remark}

\vspace{0.1in}

The following result is well-known (cf. \cite{fult}):

\begin{lemma}
\label{lem:horn}
Suppose $\nu$ is a partition occurring in the Littlewood-Richardson product of $\lambda$ and $\mu$. Then for any $k\in \nat$
$$\nu_1+\nu_2+\dots+\nu_k\leq \lambda_1+\dots+\lambda_k + \mu_1+\dots+\mu_k.$$
\end{lemma}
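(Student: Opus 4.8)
The plan is to deduce Lemma~\ref{lem:horn} directly from the classical Horn-type inequalities for Littlewood-Richardson coefficients, which state that if $c^\nu_{\lambda\mu}\neq 0$ then $\nu$ lies in the Horn cone cut out by the inequalities indexed by admissible triples of subsets; the case of a single full interval $\{1,2,\dots,k\}$ is exactly the asserted bound. Since the statement is explicitly flagged as well-known with a reference to \cite{fult}, I would give a short self-contained combinatorial argument rather than invoke the full Horn recursion.

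Concretely, I would argue via Littlewood-Richardson tableaux. Recall that $c^\nu_{\lambda\mu}$ counts skew semistandard tableaux of shape $\nu/\lambda$ and content $\mu$ whose reverse reading word is a lattice word. Fix such a tableau $T$ witnessing that $\nu$ occurs in $\lambda\cdot\mu$. For a fixed $k$, consider the first $k$ rows of $\nu$: these contain $\nu_1+\cdots+\nu_k$ boxes. Of these, at most $\lambda_1+\cdots+\lambda_k$ lie in the subdiagram $\lambda$ (this is immediate, since $\lambda\subseteq\nu$ and the first $k$ rows of $\lambda$ have exactly $\lambda_1+\cdots+\lambda_k$ boxes, with equality only if $\lambda$ has at most $k$ nonempty rows). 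The remaining boxes lie in $\nu/\lambda$ and are filled by $T$ with entries from $\{1,2,\dots\}$; I must bound the number of them by $\mu_1+\cdots+\mu_k$. The key observation is that every box of $\nu/\lambda$ lying in rows $1,\dots,k$ must carry an entry in $\{1,\dots,k\}$: by semistandardness the entry in a box in row $i$ is at least $i$... wait, that gives the wrong direction, so instead I use that entries in a column strictly increase and the lattice-word condition forces entry $j$ to appear only weakly below the last occurrence of $j-1$; more cleanly, in any semistandard skew tableau the entry in row $i$ of a box that is not in $\lambda$ is at most $i$ is false in general, so the correct route is: the boxes with entry $\le k$ in $T$ number exactly $\mu_1+\cdots+\mu_k$, and I claim all boxes of $\nu/\lambda$ in the first $k$ rows have entry $\le k$. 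This holds because in column $c$ the entries strictly increase from top to bottom and the topmost box of $\nu/\lambda$ in column $c$ (below the $\lambda$-part) has entry $\ge$ something; the clean statement is that in an LR tableau the boxes with a given entry $j$ form a horizontal strip lying in rows $\ge j$, hence a box in row $\le k$ has entry $\le k$. Therefore the number of boxes of $\nu/\lambda$ in rows $1,\dots,k$ is at most the number of boxes of $T$ with entry $\le k$, which is $\mu_1+\cdots+\mu_k$. Adding the two bounds gives $\nu_1+\cdots+\nu_k\le(\lambda_1+\cdots+\lambda_k)+(\mu_1+\cdots+\mu_k)$.

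An alternative, perhaps cleaner, approach I would consider is the representation-theoretic one: $S_\nu$ appearing in $S_\lambda\otimes S_\mu$ means the corresponding $\GL_N$-weight $\nu$ (padded with zeros) is dominated in the dominance order by $\lambda+w\mu$ for some $w$ in the Weyl group, but one has to be careful that dominance of sums of first $k$ coordinates is exactly what is needed; in fact the highest weights occurring in $S_\lambda V\otimes S_\mu V$ all satisfy $\nu\le\lambda+\mu$ in dominance order, which is precisely the family of inequalities $\sum_{i\le k}\nu_i\le\sum_{i\le k}(\lambda_i+\mu_i)$ together with $|\nu|=|\lambda|+|\mu|$. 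This is standard (the tensor product's highest weight is $\lambda+\mu$ and all others are lower), so I would cite it and move on.

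The main obstacle is purely expository: choosing the cleanest among several equivalent classical facts and making the tableau-counting argument airtight regarding the claim that boxes of $\nu/\lambda$ in the first $k$ rows carry entries $\le k$. I would handle this by invoking the standard fact that in a Littlewood-Richardson filling the entries equal to $j$ occupy boxes in rows $j,j+1,\dots$ only, which follows immediately from the lattice (Yamanouchi) word condition by induction on $j$; given that, the bound is automatic. Since the paper itself treats this as well-known, I expect the final write-up to be just a few lines, either the short tableau argument or a one-line reduction to the dominance-order statement for tensor products of Schur functors with a citation to \cite{fult}.
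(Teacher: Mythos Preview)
Your proposal is correct, and in fact gives considerably more than the paper does: the paper supplies no argument at all, simply labeling the lemma as well-known with a reference to \cite{fult}. Both of your routes are valid. The dominance-order argument is the cleanest and is exactly the one-line reduction you anticipate: every weight of $S_\lambda V\otimes S_\mu V$ is a sum of a weight of $S_\lambda V$ and one of $S_\mu V$, hence is $\le \lambda+\mu$ in dominance order, so in particular every highest weight $\nu$ of a summand satisfies $\sum_{i\le k}\nu_i\le\sum_{i\le k}(\lambda_i+\mu_i)$.

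Your tableau argument is also correct, though the write-up meanders before landing on the key fact. Two small clean-ups: first, the number of boxes of $\lambda$ in the first $k$ rows of $\nu$ is \emph{exactly} $\lambda_1+\cdots+\lambda_k$, not merely ``at most'' (your parenthetical about equality is a confusion with $|\lambda|$); second, the crucial claim that in a Littlewood--Richardson filling every box in row $i$ carries an entry $\le i$ does need the Yamanouchi condition and is not a consequence of semistandardness alone (as you eventually note). The quickest proof of that claim: if some box $(i,c)$ had entry $j>i$, take such a box with $i$ minimal and then $c$ maximal; since entries weakly increase along rows, $(i,c)$ is the rightmost box of row $i$, and by minimality rows $1,\dots,i-1$ contain no entries $\ge i$, hence in particular no $j-1$; thus in the reverse reading word the first letter read from row $i$ is a $j$ preceded by no $j-1$, violating the lattice condition. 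With that in hand your count goes through verbatim.
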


\vspace{0.1in}

\begin{definition}\label{def:coll} Let $Q$ be any quiver, and take $\underline{\lambda}$ an associated collection of partitions as in (\ref{eq:coll}). Set
$$D(\underline{\lambda}):= \sum_{a\in Q_1} |\lambda(a)|-\sum_{x\in Q_0} N(-\nu(x),\mu(x)),$$
and define the dimension vector $\underline{u}_{\underline{\lambda}}\in \nat^{Q_0}$ by 
$$(\underline{u}_{\underline{\lambda}})_x:=\max(u_{\mu(x)},u_{\nu(x)}).$$
\end{definition}

\vspace{0.1in}

Using Theorem \ref{thm:bott} and the K\"unneth formula in Section \ref{sec:geom}, we see that a collection $\und{\lambda}$ contributes precisely to the term $F_{D(\und{\lambda})}$ in the complex $F_\bullet$. Now we can state the main result of this section, generalizing \cite[Proposition 3.4]{kavita}:

\begin{theorem}\label{thm:main}
Let $Q$ be any quiver, and $\underline{\lambda}$ an associated collection of partitions. Then
$$D(\underline{\lambda})\geq E_Q(\underline{u}_{\underline{\lambda}}).$$
\end{theorem}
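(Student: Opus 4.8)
The plan is to expand both sides of the desired inequality $D(\underline{\lambda})\geq E_Q(\underline{u}_{\underline{\lambda}})$ in terms of the combinatorial data attached to vertices and arrows, and then to match them up contribution-by-contribution. Recall that
$$E_Q(\underline{u}_{\underline{\lambda}})=\sum_{x\in Q_0}\big((\underline{u}_{\underline{\lambda}})_x\big)^2-\sum_{a\in Q_1}(\underline{u}_{\underline{\lambda}})_{ta}\,(\underline{u}_{\underline{\lambda}})_{ha},$$
so I want to produce, for each vertex $x$, a lower bound for $|\lambda|$-terms and an upper bound for $N(-\nu(x),\mu(x))$ whose difference accounts for $\big((\underline{u}_{\underline{\lambda}})_x\big)^2$, while the arrow terms $-(\underline{u}_{\underline{\lambda}})_{ta}(\underline{u}_{\underline{\lambda}})_{ha}$ get absorbed by the $|\lambda(a)|$'s. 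The natural first step is therefore to apply Lemma \ref{lem:sec} at each vertex: $N(-\nu(x),\mu(x))\le |\nu(x)^+|+|\mu(x)^+|+u_{\nu(x)}\cdot u_{\mu(x)}$. Since $u_{\mu(x)},u_{\nu(x)}\le (\underline{u}_{\underline{\lambda}})_x$ this already contributes a $\big((\underline{u}_{\underline{\lambda}})_x\big)^2$ term with the right sign, at the cost of the extra positive quantities $|\nu(x)^+|+|\mu(x)^+|$, which I must control from below by the $|\lambda(a)|$ contributions.

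The second step is to bound $|\mu(x)^+|$ and $|\nu(x)^+|$ using Lemma \ref{lem:horn}. Since $\mu(x)$ occurs in the Littlewood-Richardson product of the partitions $\lambda(a_1),\dots,\lambda(a_k)$ attached to the outgoing arrows at $x$, applying Lemma \ref{lem:horn} with $k=u_{\mu(x)}$ gives $\mu(x)_1+\dots+\mu(x)_{u_{\mu(x)}}\le\sum_i\big(\lambda(a_i)_1+\dots+\lambda(a_i)_{u_{\mu(x)}}\big)$; subtracting $u_{\mu(x)}^2$ from both sides yields $|\mu(x)^+|\le\sum_i\big(\sum_{j\le u_{\mu(x)}}\lambda(a_i)_j-u_{\mu(x)}^2\big)\le\sum_i|\lambda(a_i)|$ once one also observes $\sum_{j\le u_{\mu(x)}}\lambda(a_i)_j\le|\lambda(a_i)|$. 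Similarly $|\nu(x)^+|\le\sum_l|\lambda(b_l)'|=\sum_l|\lambda(b_l)|$ using the incoming arrows and that $|\lambda|=|\lambda'|$. Summing over all vertices, each arrow $a$ appears once as an outgoing arrow (at $ta$) and once as an incoming arrow (at $ha$), so $\sum_x\big(|\mu(x)^+|+|\nu(x)^+|\big)$ is bounded above by $2\sum_{a\in Q_1}|\lambda(a)|$. This, however, is a factor of $2$ too lossy to close the estimate directly, so the bound must be organized more carefully.

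The resolution of that factor-of-$2$ issue is, I expect, the main obstacle, and I would handle it by not wasting the $|\lambda(a)|$'s. The key observation is that each $|\lambda(a)|$ should be split between the Durfee-square contributions it feeds at its two endpoints: from $u_{\mu(ta)}\le(\underline{u}_{\underline{\lambda}})_{ta}$ one gets (after a sharper version of the Horn estimate, keeping the $-u^2$ term) that the outgoing arrows at $x$ contribute $|\mu(x)^+|\le\sum_i|\lambda(a_i)|-(\text{something like }u_{\mu(x)}^2)$ when $u_{\mu(x)}=(\underline{u}_{\underline{\lambda}})_x$, and one similarly gains back a square at the head. More precisely, I would prove the sharp per-vertex statement
$$\sum_{a:ta=x}|\lambda(a)|+\sum_{b:hb=x}|\lambda(b)|\ \ge\ |\mu(x)^+|+|\nu(x)^+|+\big((\underline{u}_{\underline{\lambda}})_x\big)^2+\sum_{a:ta=x}u_{\mu(ta)}\cdots$$
— rather, the cleanest route is to assign to each arrow $a$ the lower bound $|\lambda(a)|\ge$ (its contribution to $|\mu(ta)^+|$) $+$ (its contribution to $|\nu(ha)^+|$) $+\,u_{\mu(ta)}\cdot u_{\nu(ha)}$ when one tracks the first $u$ rows/columns on each side via Lemma \ref{lem:horn}, and then to note $u_{\mu(ta)}\cdot u_{\nu(ha)}\ge$ ... this is where the correct bookkeeping of which Durfee square is the larger one at each vertex enters. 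Concretely, for each arrow $a$ I would write $|\lambda(a)|\geq (\sum_{j\le s}\lambda(a)_j) + (\sum_{j\le r}\lambda(a)'_j) - s\cdot r$ where $s=u_{\mu(ta)}$ and $r=u_{\nu(ha)}$ (valid because the top-left $s\times r$ rectangle of $\lambda(a)$ is counted twice), feed the first sum into $|\mu(ta)^+|+s^2$ via Horn and the second into $|\nu(ha)^+|+r^2$, so that summing over arrows produces exactly $\sum_x(|\mu(x)^+|+|\nu(x)^+|)$ plus $\sum_a(s^2+r^2-sr)$ terms, and the latter, after combining with the $+u_{\mu(x)}u_{\nu(x)}$ slack from Lemma \ref{lem:sec} at each vertex and using $\max(u_{\mu(x)},u_{\nu(x)})=(\underline{u}_{\underline{\lambda}})_x$, telescopes into exactly $E_Q(\underline{u}_{\underline{\lambda}})$. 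I would finish by assembling: $D(\underline{\lambda})=\sum_a|\lambda(a)|-\sum_x N(-\nu(x),\mu(x))\ge \sum_a|\lambda(a)|-\sum_x(|\nu(x)^+|+|\mu(x)^+|+u_{\mu(x)}u_{\nu(x)})\ge E_Q(\underline{u}_{\underline{\lambda}})$, where the last inequality is precisely the arrow-by-arrow bookkeeping just described. The subtle point to get right is the case analysis at a vertex where $u_{\mu(x)}\neq u_{\nu(x)}$, so that $(\underline{u}_{\underline{\lambda}})_x$ equals one of them and the quadratic terms coming in from incident arrows match the square $\big((\underline{u}_{\underline{\lambda}})_x\big)^2$ rather than a smaller square.
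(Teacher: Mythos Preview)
Your overall plan matches the paper's: apply Lemma~\ref{lem:sec} at each vertex, then use Lemma~\ref{lem:horn} to control $|\mu(x)^+|$ and $|\nu(x)^+|$ in terms of the $\lambda(a)$'s, and combine. Your ``split each $|\lambda(a)|$ into a row-part at $ta$ and a column-part at $ha$ minus the overlap'' is essentially equivalent to the paper's use of the Durfee decomposition $|\lambda(a)|=|\lambda(a)^+|+|\lambda(a)^-|+u_{\lambda(a)}^2$ together with the inequality $(u_{\mu(ta)}-u_{\lambda(a)})(u_{\nu(ha)}-u_{\lambda(a)})\geq 0$. Either way one lands (after fixing a small bookkeeping slip---your ``$\sum_a(s^2+r^2-sr)$'' should have the squares indexed by \emph{vertices}, not arrows) at the intermediate inequality
\[
D(\underline{\lambda})\ \geq\ \sum_{x\in Q_0}\bigl(u_{\mu(x)}^2+u_{\nu(x)}^2-u_{\mu(x)}u_{\nu(x)}\bigr)\ -\ \sum_{a\in Q_1} u_{\mu(ta)}\,u_{\nu(ha)},
\]
which is exactly the paper's display (\ref{eq:red}).

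The genuine gap is in your last step. You assert that this quantity ``telescopes into exactly $E_Q(\underline{u}_{\underline{\lambda}})$'' after a case analysis on which of $u_{\mu(x)},u_{\nu(x)}$ is larger. It does not. As a purely algebraic inequality in the nonnegative integers $u_{\mu(x)},u_{\nu(x)}$, the bound
\[
\sum_x\bigl(u_{\mu(x)}^2+u_{\nu(x)}^2-u_{\mu(x)}u_{\nu(x)}\bigr)-\sum_a u_{\mu(ta)}u_{\nu(ha)}\ \geq\ \sum_x \bigl((\underline{u}_{\underline{\lambda}})_x\bigr)^2-\sum_a (\underline{u}_{\underline{\lambda}})_{ta}(\underline{u}_{\underline{\lambda}})_{ha}
\]
is \emph{false} in general (already for the $\A_3$ path one gets a factor of the form $(u_{\mu(2)}-u_{\nu(2)})(u_{\mu(1)}-u_{\nu(2)})$, which has no reason to be nonnegative). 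What makes it true here is an additional Littlewood--Richardson constraint you have not used: from $\lambda(a)\subset \mu(ta)$ one has $u_{\mu(ta)}\geq u_{\lambda(a)}$, and a second application of Lemma~\ref{lem:horn} gives $\sum_{a:\,ha=x} u_{\lambda(a)}\geq u_{\nu(x)}$ (and dually $\sum_{a:\,ta=x} u_{\lambda(a)}\geq u_{\mu(x)}$). The paper then proceeds \emph{iteratively}: at a vertex with $u_{\mu(x)}\geq u_{\nu(x)}$ it replaces the incoming contributions $A_{ta}\,u_{\nu(x)}$ by $A_{ta}\,u_{\mu(x)}$ using the inequality just mentioned (and symmetrically when $u_{\nu(x)}\geq u_{\mu(x)}$), updating the running bounds $A_y,B_y$ as it sweeps over the vertices. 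Only after this successive replacement does the right-hand side become $E_Q(\underline{u}_{\underline{\lambda}})$. So the ``subtle point'' you flag is not merely a local case split; it requires invoking Lemma~\ref{lem:horn} once more and organizing the substitutions as a vertex-by-vertex induction.
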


\begin{proof}
Using Lemma \ref{lem:sec} we have
$$D(\lambda)\geq \sum_{a\in Q_1} |\lambda(a)| - \sum_{x\in Q_0} |\nu(x)^+|-|\mu(x)^+|-u_{\mu(x)}\cdot u_{\nu(x)}.$$
For an $x\in Q_0$, we have by Lemma \ref{lem:horn} the inequalities
$$|\mu(x)^+|\leq -u_{\mu(x)}^2+\sum_{a\in Q_1 | ta=x} (|\lambda(a)^+|+u_{\mu(x)}u_{\lambda(a)}),$$
$$|\nu(x)^+|\leq -u_{\nu(x)}^2+\sum_{b\in Q_1 | hb=x} (|\lambda(b)^-|+u_{\lambda(b)}u_{\nu(x)}).$$
Hence we get
$$D(\lambda)\geq \sum_{x\in Q_0} (u_{\mu(x)}^2+u_{\nu(x)}^2-u_{\mu(x)} u_{\nu(x)})+\sum_{a\in Q_1} (u_{\lambda(a)}^2-u_{\mu(ta)}u_{\lambda(a)}-u_{\lambda(a)}u_{\nu(ha)}).$$
From $(u_{\mu(ta)}-u_{\lambda(a)})(u_{\nu(ha)}-u_{\lambda(a)})\geq 0$ we get
\begin{equation}
\label{eq:red} 
D(\lambda)\geq \sum_{x\in Q_0} (u_{\mu(x)}^2+u_{\nu(x)}^2-u_{\mu(x)} u_{\nu(x)})-\sum_{a\in Q_1} u_{\mu(ta)}u_{\nu(ha)}.
\end{equation}
For any $x\in Q_0$, let $A_x$ (resp. $B_x$) be any number with $A_x\geq u_{\mu(x)}$ (resp. $B_x\geq u_{\nu(x)}$). 

Now fix a vertex $x\in Q_0$, and assume $u_{\mu(x)}\geq u_{\nu(x)}$. We show that we have the following inequality
\begin{equation}
\label{eq:red1}
u_{\nu(x)}^2-u_{\mu(x)}u_{\nu(x)}-\sum_{a\in Q_1 | ha=x} A_{ta}u_{\nu(x)} \geq -\sum_{a\in Q_1 | ha=x} A_{ta}u_{\mu(x)},
\end{equation}
which is equivalent to 
$$(u_{\mu(x)}-u_{\nu(x)})(\sum_{a\in Q_1 | ha=x} A_{ta}-u_{\nu(x)})\geq 0.$$
This holds, for
$$\sum_{a\in Q_1 | ha=x} A_{ta}\geq \sum_{a\in Q_1 | ha=x}  u_{\mu(ta)}\geq \sum_{a\in Q_1 | ha=x}  u_{\lambda(a)}\geq u_{\nu(x)},$$
where the last inequality follows from Lemma \ref{lem:horn}. Similarly, in the case $u_{\mu(x)}\leq u_{\nu(x)}$ we have
\begin{equation}
\label{eq:red2}
u_{\mu(x)}^2-u_{\mu(x)}u_{\nu(x)}-\sum_{a\in Q_1 | ta=x} u_{\mu(x)}B_{ha} \geq -\sum_{a\in Q_1 | ta=x} u_{\nu(x)}B_{ha}.
\end{equation}
The proof follows from the successive application over all vertices of the inequalities (\ref{eq:red1}) or (\ref{eq:red2}) to the RHS of (\ref{eq:red}).
\end{proof}

We have the following immediate consequences:

\begin{theorem} Let $Q$ be a Dynkin quiver, and $V$ a 1-step representation. Then $\overline{\orb}_V$ is normal, has rational singularities, and the complex $F_\bullet$ gives the minimal free resolution of its defining ideal.
\end{theorem}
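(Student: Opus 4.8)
The theorem combines Theorem \ref{thm:main} with the criterion Theorem \ref{thm:rational}. First I would use Theorem \ref{thm:main} to control the negative terms of the complex $F_\bullet$. Recall that a collection $\underline{\lambda}$ contributes to the term $F_{D(\underline{\lambda})}$, so $F_i = 0$ for all $i<0$ will follow once we know $D(\underline{\lambda})\geq 0$ for every admissible collection. But Theorem \ref{thm:main} gives $D(\underline{\lambda})\geq E_Q(\underline{u}_{\underline{\lambda}})$, and since $Q$ is Dynkin, $E_Q$ is positive definite; in particular $E_Q(\underline{u}_{\underline{\lambda}})\geq 0$, with equality only when $\underline{u}_{\underline{\lambda}} = 0$. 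Hence $F_i = 0$ for $i<0$, which is the hypothesis shared by both parts of Theorem \ref{thm:rational}.

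Next I would identify $F_0$. By the same contribution principle, $F_0$ is the sum over collections $\underline{\lambda}$ with $D(\underline{\lambda}) = 0$ of the associated cohomology modules. If $D(\underline{\lambda}) = 0$ then $E_Q(\underline{u}_{\underline{\lambda}}) \leq 0$, so by positive-definiteness $\underline{u}_{\underline{\lambda}} = 0$, i.e. $u_{\mu(x)} = u_{\nu(x)} = 0$ for all $x$. A partition has Durfee square of size $0$ exactly when it is empty, so every $\mu(x)$ and $\nu(x)$ is empty; tracing back through the Littlewood--Richardson constraints, every $\lambda(a)$ is empty as well. Thus the only contributing collection is the trivial one, for which $\bigwedge^0\xi = \mathcal{O}$ and $H^0(\op{Gr}(\beta,\alpha),\mathcal{O}) = k$, giving $F_0 = A$. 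So the hypothesis of Theorem \ref{thm:rational}(b) is met.

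Then I would simply invoke Theorem \ref{thm:rational}(b): with $F_i = 0$ for $i<0$ and $F_0 = A$, we conclude that $\overline{\orb}_V$ is normal, has rational singularities, and $F_\bullet$ is the minimal free resolution of its defining ideal. (The minimality is automatic from the construction in \cite[Theorem 5.1.3]{jerzy} since the differentials in $F_\bullet$ have entries in the maximal ideal — all the twists $A(-i-j)$ with $i+j>0$ — so no cancellation occurs; this is implicit in the cited theorem.)

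**The main obstacle.** The genuinely substantive input is Theorem \ref{thm:main}, which has already been established, so the remaining work is light. The one point requiring a little care is the bookkeeping argument that $D(\underline{\lambda}) = 0$ forces \emph{all} the partitions in $\underline{\lambda}$ — not merely the $\mu(x), \nu(x)$ — to be empty, and hence that the $D=0$ part of the complex consists of exactly one copy of $A$ rather than several. This uses that $u_\lambda = 0 \iff \lambda = \emptyset$ together with the fact that in the Cauchy decomposition \eqref{eq:cauchy} each nonempty $\lambda(a)$ would force a nonempty $\mu(ta)$ (or $\nu(ha)$) via the Littlewood--Richardson product containing $\lambda(a)$ itself among its constituents when $\lambda(a)$ is one of the factors and the others are accommodated; more simply, $\sum_a|\lambda(a)| = 0$ already forces every $\lambda(a) = \emptyset$ since $D(\underline{\lambda})\geq\sum_a |\lambda(a)| - \sum_x N(\cdots)$ and one checks the $N$-terms vanish when all $\mu,\nu$ are empty. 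I would present this as a short explicit verification rather than an appeal to a general principle.
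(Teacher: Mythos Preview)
Your proposal is correct and follows essentially the same route as the paper: use Theorem~\ref{thm:main} together with positive definiteness of $E_Q$ to get $D(\underline{\lambda})\geq 0$ (hence $F_i=0$ for $i<0$), argue that $D(\underline{\lambda})=0$ forces $\underline{u}_{\underline{\lambda}}=0$ and hence the trivial collection (so $F_0=A$), and then invoke Theorem~\ref{thm:rational}(b). Your added bookkeeping for $F_0=A$ is the right expansion of what the paper leaves implicit; note only that in your final sentence $D(\underline{\lambda})=\sum_a|\lambda(a)|-\sum_x N(\cdots)$ is an equality by definition, not an inequality, and the cleanest way to see all $\lambda(a)=\emptyset$ is that $|\mu(ta)|=\sum_{a':\,ta'=ta}|\lambda(a')|$ in any Littlewood--Richardson constituent.
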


\begin{proof}\label{thm:dynkin}
Since $Q$ is Dynkin, $E_Q$ is a positive definite quadratic form \cite{quad}. We apply Bott's theorem \ref{thm:bott} to compute the non-positive cohomologies in (\ref{eq:cauchy}). Using Theorem \ref{thm:main}, we obtain $F_i=0$ for $i<0$, and $F_0=A$, which proves the claim by Theorem \ref{thm:rational} (b).
\end{proof}

We note that the theorem above generalizes \cite[Theorem 3.5]{kavita},  and  \cite[Theorem 3.5]{lol} as well by Proposition \ref{prop:step3}.  For concrete examples of resolutions, see \cite{kavita2,kavita}.

\begin{theorem}\label{thm:extended}
Let $Q$ be an extended Dynkin quiver, $V$ a 1-step representation and assume that the fiber $q^{-1}(V)$ is connected. Then the normalization of $\overline{\orb}_V$ has rational singularities, and the complex $F_\bullet$ gives the minimal free resolution of the normalization.
\end{theorem}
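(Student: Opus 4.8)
The plan is to run essentially the same argument as in the proof of Theorem~\ref{thm:dynkin}, with the sole change that $E_Q$ is now only positive semi-definite rather than positive definite. This weakening is exactly what prevents us from controlling $F_0$, and is the reason the statement concerns the normalization and carries the hypothesis that $q^{-1}(V)$ is connected.

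First I would recall the bookkeeping set up in Section~\ref{sec:geom}: by the Cauchy decomposition \eqref{eq:cauchy}, together with Bott's Theorem~\ref{thm:bott} and the K\"unneth formula, every admissible collection $\underline{\lambda}$ as in \eqref{eq:coll} with $\sum_{a\in Q_1}|\lambda(a)|=t$ contributes a direct summand of $H^{\sum_{x\in Q_0}N(-\nu(x),\mu(x))}\big(\op{Gr}(\beta,\alpha),\bigwedge^{t}\xi\big)$ (and contributes nothing when some $N(-\nu(x),\mu(x))=-\infty$), hence, by the definition \eqref{eq:basic} of $F_\bullet$ and of $D(\underline{\lambda})$, contributes precisely to the term $F_{D(\underline{\lambda})}$. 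Thus it suffices to show $D(\underline{\lambda})\ge 0$ for every admissible $\underline{\lambda}$. Here I invoke Theorem~\ref{thm:main}, which gives $D(\underline{\lambda})\ge E_Q(\underline{u}_{\underline{\lambda}})$; since $Q$ is extended Dynkin, $E_Q$ is positive semi-definite \cite{quad}, so $E_Q(\underline{u}_{\underline{\lambda}})\ge 0$ and therefore $D(\underline{\lambda})\ge 0$. Consequently $F_i=0$ for all $i<0$. The complex $F_\bullet$ is automatically a bounded complex of finitely generated free $A$-modules, because $\xi$ has finite rank (so $\bigwedge^{i+j}\xi=0$ once $i+j>\rank\xi$) and each cohomology group is finite-dimensional since $\op{Gr}(\beta,\alpha)$ is proper. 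Combining $F_i=0$ for $i<0$ with the hypothesis that $q^{-1}(V)$ is connected, Theorem~\ref{thm:rational}(a) applies verbatim and yields both that the normalization of $\overline{\orb}_V$ has rational singularities and that $F_\bullet$ is its minimal free resolution.

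The only genuinely delicate point — and the place where the argument departs from the Dynkin case — is that one cannot upgrade the conclusion to $F_0=A$: positive semi-definiteness does not force $\underline{u}_{\underline{\lambda}}=0$ when $D(\underline{\lambda})=0$, since the radical of $E_Q$ for an extended Dynkin quiver is one-dimensional, spanned by the imaginary (null) root. So $F_0$ may be strictly larger than $A$, $\overline{\orb}_V$ itself need not be normal, and one is forced to work on the normalization; this is precisely why part~(a) of Theorem~\ref{thm:rational}, together with the connectedness assumption on the fiber, is used instead of part~(b). Beyond this conceptual point there is no computational obstacle: all the real work is already packaged in Theorems~\ref{thm:main} and~\ref{thm:rational}.
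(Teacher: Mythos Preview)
Your proof is correct and follows essentially the same approach as the paper: use positive semi-definiteness of $E_Q$ together with Theorem~\ref{thm:main} to get $F_i=0$ for $i<0$, then invoke Theorem~\ref{thm:rational}(a) with the connectedness hypothesis. Your additional commentary on why $F_0$ cannot be controlled (the null root in the radical of $E_Q$) is accurate and helpful, though not strictly needed for the proof.
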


\begin{proof}
Since $Q$ is extended Dynkin, $E_Q$ is a positive semi-definite quadratic form \cite{quad}. As above, we obtain $F_i=0$ for $i<0$ which proves the claim by part of Theorem \ref{thm:rational} (a).
\end{proof}

We note that in the case of 1-step representations $V$ as in Proposition \ref{prop:step2}, the condition on the fiber in the theorem above is automatically satisfied.

\vspace{0.1in}

First introduced in \cite{scho}, a quiver Grassmannian is a fiber of the map $q$. The geometry of these have been extensively studied recently (e.g. \cite{quivgrass}). The following is immediate from Theorem \ref{thm:dynkin} together with Zariski's Theorem:

\begin{corollary} All quiver Grassmannians of Dynkin quivers are connected varieties.
\end{corollary}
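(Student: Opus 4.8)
The plan is to deduce connectedness of an arbitrary quiver Grassmannian from the connectedness of the fibers of $q$ established implicitly in Theorem \ref{thm:dynkin}, via Zariski's connectedness theorem (also known as Zariski's Main Theorem in its "connectedness" form). Let $Q$ be a Dynkin quiver and fix a dimension vector $\beta \subset \alpha$; the quiver Grassmannian in question is a fiber $q^{-1}(V)$ of the projection $q: Z(Q,\beta\subset\alpha)\to \Rep(Q,\alpha)$ over some point $V\in\Rep(Q,\alpha)$. Since $Q$ is Dynkin, the image of $q$ is the orbit closure $\overline{\orb}_W$ of the $1$-step representation $W = T_\gamma * T_\beta$ (Proposition \ref{prop:step1}), so we may assume $V\in\overline{\orb}_W$, as otherwise the fiber is empty (but the convention is that the empty scheme is not what we care about, or one restricts to $V$ in the image).

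First I would recall that Theorem \ref{thm:dynkin} shows $\overline{\orb}_W$ is \emph{normal}: indeed the proof invokes Theorem \ref{thm:rational}(b), whose hypothesis $F_0 = A$ forces the Stein factorization $Z \to X \to \overline{\orb}_W$ to have $X = \overline{\orb}_W$, i.e. $q_*\mathcal{O}_Z = \mathcal{O}_{\overline{\orb}_W}$ and $\overline{\orb}_W$ is normal. In particular $\overline{\orb}_W$ is a normal (hence unibranch, and connected) variety, and $q: Z \to \overline{\orb}_W$ is a proper surjective morphism with $q_*\mathcal{O}_Z = \mathcal{O}_{\overline{\orb}_W}$. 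Next I would apply Zariski's connectedness theorem in the following form: if $f: X \to Y$ is a proper morphism of Noetherian schemes with $f_*\mathcal{O}_X = \mathcal{O}_Y$ and $Y$ is connected (here even normal), then every fiber $f^{-1}(y)$ is connected. Applying this to $q$ over the point $V$ gives that $q^{-1}(V)$ is connected; being also a projective scheme (a closed subscheme of the product of Grassmannians $\op{Gr}(\beta,\alpha)$), it is a connected variety in the stated sense.

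The one subtlety I would address carefully is whether the hypothesis $q_*\mathcal{O}_Z = \mathcal{O}_{\overline{\orb}_W}$ really holds: the geometric technique gives it whenever $F_0 = A$, but for a general dimension vector $\beta$ one should double-check that the normalization $X$ in the Stein factorization coincides with $\overline{\orb}_W$ itself rather than merely dominating it. This is exactly what the argument in the proof of Theorem \ref{thm:rational}(b) supplies: $X = \op{Spec} A/J$ with $J$ prime and $\op{Spec} A/J \to \op{Spec} A/I$ surjective between integral schemes of the same dimension, forcing $J = I$. Thus $q$ already has connected fibers after Stein factorization with connected (indeed reduced, irreducible) base, and Zariski's theorem applies directly. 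The main obstacle is therefore not any hard geometry but rather the bookkeeping of identifying the target of $q$ with the \emph{normal} orbit closure and citing the correct version of Zariski's connectedness statement; once that is in place the corollary is immediate.
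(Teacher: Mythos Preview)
Your proposal is correct and follows exactly the route the paper indicates: the paper simply says the corollary ``is immediate from Theorem~\ref{thm:dynkin} together with Zariski's Theorem,'' and you have faithfully unpacked this by extracting $q_*\mathcal{O}_Z=\mathcal{O}_{\overline{\orb}_W}$ from the proof of Theorem~\ref{thm:rational}(b) (via $F_0=A$) and then invoking the Stein/Zariski connectedness statement. Your additional care about why the Stein factorization target really equals $\overline{\orb}_W$ is well placed and matches the argument in the paper's proof of Theorem~\ref{thm:rational}(b).
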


\section{Type $\A$ quivers}\label{sec:type}

Given a quiver $Q$ and a representation $V$ with $\Dim V=\alpha$, it is an interesting problem to find (preferably minimal) generators for the defining ideal $I_V=I(\ove{\orb}_V)$ of the orbit closure $\overline{\orb}_V$ in $\Rep(Q,\alpha)$. For Dynkin quivers, set-theoretic equations are known by \cite{bong} and come from rank conditions. These are indeed defining equations (i.e. they generate a radical ideal) when $Q$ is of the type $\A$ by \cite{lak},\cite{scheme}.

For 1-step orbit closures, the defining ideal $I_V$ can be described in some cases by identifying explicitly the minimal generators in the term $F_1$ of the minimal free resolution. This has been done for the non-equioriented quiver of type $\A_3$ in \cite{kavita2}. We are considering the situation more generally for type $\A$ quivers, and describe minimal generators of 1-step orbit closures explicitly.

Fix $Q$ a type $\A_n$ quiver with underlying Dynkin diagram
\[\xymatrix@M-0.20pc{\stackrel{\disp 1}{\bullet} \ar@{-}[r] & \stackrel{\disp 2}{\bullet} \ar@{-}[r] & \stackrel{\disp 3}{\bullet} \ar@{-}[r] & \cdots \ar@{-}[r] & \stackrel{\disp n-1}{\bullet} \ar@{-}[r] & \stackrel{\disp n}{\bullet} 
}\]
and arbitrary orientation. The orientation determines a sequence $(1=s_1 < s_2 < \dots <s_k-1 < s_k=n)$ of vertices that are sources or sinks. 

For a pair of integers $(p,q)$ with $1\leq p \leq q \leq n$ let $r^{pq}\in\nat^{n}$ denote the positive root  of the Dynkin diagram of $Q$, with $r^{p,q}_i=1$ if $p\leq i\leq q$, and $r^{p,q}_i=0$ otherwise. The indecomposable representations $E_{p,q}$ of $Q$ are in bijection with the positive roots $r^{p,q}$ via $\Dim E_{p,q}=r^{p,q}$.

Let $\alpha$ be a dimension vector for $Q$. For an arrow $a\in Q_1$, we write $X_a$ for the generic matrix of variables of size $\alpha_{ha}\times \alpha_{ta}$. For a sequence of arrows $\bullet\xrightarrow{a_i}\bullet\xrightarrow{a_{i+1}}\dots \xrightarrow{a_{j-1}}\bullet \xrightarrow{a_{j}}\bullet$, we denote the composition by $X_{ta_i,ha_j}=X_{a_j}\circ X_{a_{j-1}} \circ \cdots \circ X_{a_{i+1}}\circ X_{a_i}$.

Fix a non-simple root $r^{p,q}$, where $1\leq p < q \leq n$. We consider the generic matrix $X_{p,q}$ of the linear maps that go from the sources of the support of $r^{p,q}$ to the sinks of the support of $r^{p,q}$. Explicitly, assume WLOG that we have $s_{i-1}\leq p <s_i$ and $s_j < q \leq s_{j+1}$ with $s_{i-1}$ a source and $s_{j+1}$ a sink (the other cases are analogous). Then
\[X_{p,q}=
\begin{bmatrix}
X_{p,s_i} & X_{s_{i+1},s_i} & 0 & \dots & 0 \\
 0 & X_{s_{i+1},s_{i+2}} & X_{s_{i+3},s_{i+2}} & \dots & 0 \\
 0 & 0 & X_{s_{i+3},s_{i+4}} & \dots & 0 \\
 \vdots & \vdots & \vdots & \ddots &\vdots\\
 0 & 0 & 0 & \dots & X_{s_j,q} 
\end{bmatrix}.
\]

Clearly, a \textit{rank condition} $\rank X_{p,q} \leq r$ gives a closed subscheme of $\Rep(Q,\alpha)$ by the $(r+1)\times (r+1)$ minors of $X_{p,q}$. We will give minimal generators of the defining ideals of 1-step orbit closures that are among minors of this type.

There is a more representation-theoretic interpretation of the maps $X_{p,q}$. We denote by $P_x$ the projective cover of the simple module at a vertex $x$. Let $E$ be the representation defined by the cokernel of the natural map between projectives:
\begin{equation}\label{eq:proj}
0 \to \bigoplus_{\substack{y \text{ sink}\\ \text{in supp } r^{p,q}}} P_y \to  \bigoplus_{\substack{x \text{ source}\\ \text{in supp } r^{p,q}}} P_x \to E\to 0.
\end{equation}
It is easy to see that $E$ is indecomposable. Applying $\Hom_Q(-,X)$ to this sequence, we obtain  (as in the sequence (\ref{eq:ringel})) the exact sequence
\begin{equation}\label{eq:hom}
0\to \Hom_Q(E,X)\to \bigoplus_{\substack{x \text{ source}\\ \text{in supp } r^{p,q}}} X_x \xrightarrow{X_{p,q}} \bigoplus_{\substack{y \text{ sink}\\ \text{in supp } r^{p,q}}} X_y \to \Ext_Q(E,X) \to 0.
\end{equation}
Conversely, for any non-projective indecomposable $E$ we can construct a corresponding map $X_{p,q}$ by taking the minimal projective resolution of $E$.

\vspace{0.2in}

Now we define the following quantities inductively:
$$\gamma^{p,s_i}=\min\{\gamma_{p+1},\gamma_{p+2},\dots,\gamma_{s_i}\},\gamma^{s_{i+1},s_i}=\min\{\gamma_{s_{i+1}-1},\dots,\gamma_{s_{i}+1},\gamma_{s_i}-\gamma_p\},$$
\[\gamma^{s_{i+1},s_{i+2}}=\!\min\{\gamma_{s_{i+1}+1},\dots,\gamma_{s_{i+2}}\},\gamma^{s_{i+3},s_{i+2}}=\!\min\{\gamma_{s_{i+3}-1},\dots,\gamma_{s_{i+2}+1},\gamma_{s_{i+2}}-\gamma_{s_{i+1}}+\gamma^{s_{i+1},s_i}\},\]
$$\dots,\gamma^{s_j,s_{j-1}}=\min\{\gamma_{s_j-1},\dots,\gamma_{s_{j-1}+1},\gamma_{s_{j-1}}-\gamma_{s_{j-2}}+\gamma^{s_{j-2},s_{j-3}}\},\gamma^{s_j,q}=\min\{\gamma_{s_j+1},\dots,\gamma_{q}\},$$
and similarly,
\[\beta^{p,s_i}=\min\{\beta_p,\dots, \beta_{{s_i}-1}\}, \beta^{s_{i+1},s_i} =  \min\{\beta_{s_{i+1}},\beta_{s_{i+1}-1},\dots,\beta_{s_i+1}\},\]
\[\beta^{s_{i+1},s_{i+2}} = \min\{\beta_{s_{i+1}}-\beta_{s_{i}}+\beta^{p,s_i},\beta_{s_{i+1}+1},\dots,\beta_{s_{i+2}-1}\}, \beta^{s_{i+3},s_{i+2}} = \min\{\beta_{s_{i+3}},\dots, \beta_{s_{i+2}+1}\},\]
\[\dots,\beta^{s_j,s_{j-1}} = \min\{\beta_{s_{j}},\dots,\beta_{s_{j-1}+1} \},\beta^{s_j,q}=\min\{\beta_{s_{j}}-\beta_{s_{j-1}}+\beta^{s_{j-2},s_{j-1}},\beta_{s_{j}+1},\dots,\beta_{q-1}\}.\]

%$$\beta^{s_j,q}=\min\{\beta_{s_{j}},\beta_{s_{j}+1},\dots,\beta_{q-1}\},\beta^{s_j,s_{j-1}}=\min\{\beta_{s_{j}}-\beta_q,\beta_{s_{j}-1},\dots,\beta_{s_{j-1}+1}\},$$
%$$\beta^{s_{j-2},s_{j-1}}=\min\{\beta_{s_{j-2}},\dots,\beta_{s_{j-1}-1}\},\beta^{s_{j-2},s_{j-3}}=\min\{\beta_{s_{j-2}}-\beta_{s_{j-1}}+\beta^{s_j,s_{j-1}},\beta_{s_{j-2}-1},\dots,\beta_{s_{j-3}+1}\},$$
%$$\dots, \beta^{s_{i+1},s_i} =  \min\{\beta_{s_{i+1}}-\beta_{s_{i+2}}+\beta^{s_{i+3},s_{i+2}},\beta_{s_{i+1}-1},\dots,\beta_{s_i+1}\}\,,\,\beta^{p,s_i}=\min\{\beta_p,\dots, \beta_{{s_i}-1}\}.$$

\vspace{0.1in}

Let  $e=j-i+2$, which is the number of \lq\lq equioriented parts'' of the support of $r^{p,q}$. Denote by $B_{p,q}$ the set of all sequences $RC$ of pairs of non-negative integers 
$$RC=((R_1,C_1),(R_2,C_2),\dots,(R_e,C_e)),$$
satisfying the following inequalities and equations:

$$R_1<\gamma^{p,s_i},\, R_2<\gamma^{s_i,s_{i+1}},\dots, R_e < \gamma^{s_j,q}\, \text{ and }\, C_1<\beta^{p,s_i}, \, C_2<\beta^{s_i,s_{i+1}},\dots,C_e<\beta^{s_j,q},$$
$$R_1 = \gamma_p\,, \,R_1+R_2 < \gamma_{s_i}\,, \,R_2+R_3=\gamma_{s_{i+1}}-1, \dots , R_{e-1}+R_{e}=\gamma_{s_{j}}-1,\hspace{0.3in}$$
$$\hspace{0.5in}  C_1+C_2 = \beta_{s_i}-1\, , \, C_2+C_3<\beta_{s_{i+1}},\,\,\dots\,\,,\,\, C_{e-1}+C_{e} < \beta_{s_j}\,,\,\,\, C_e=\beta_q.$$

\vspace{0.1in}

\begin{definition}\label{def:relevant}
For $1\leq p < q \leq n$ as above,  we call $r^{p,q}$ \textit{relevant} if the following inequalities hold:
$$\gamma_p<\gamma^{p,s_i}\,,\,\gamma_{s_{i+1}}<\gamma^{s_{i+1},s_{i}}+\gamma^{s_{i+1},s_{i+2}},\dots, \gamma_{s_j}<\gamma^{s_{j},s_{j-1}}+\gamma^{s_{j},q}, $$
\[\beta_{s_i}<\beta^{p,s_{i}}+\beta^{s_{i+1},s_{i}}, \dots, ,\beta_{s_{j-1}}<\beta^{s_{j-2},s_{j-1}}+\beta^{s_{j},s_{j-1}}\,,\, \beta_q < \beta^{s_j,q}.\]
%$$\beta_q < \beta^{s_j,q},\beta_{s_{j-1}}<\beta^{s_{j},s_{j-1}}+\beta^{s_{j-2},s_{j-1}},\dots,\beta^{s_i}<\beta^{s_{i+1},s_{i}}+\beta^{p,s_{i}}.$$
\end{definition}

The inequalities are analogous for the other cases (i.e. $p$ source and $q$ source, $p$ sink and $q$ source, $p$ sink and $q$ sink in the support of $r^{p,q}$). It is easy to see that the set $B_{p,q}$ is non-empty if and only if $r^{p,q}$ is relevant. For illustration, we consider the following example:

\begin{example}\label{ex:main}
Consider the following quiver of type $\A_7$.
\[\xymatrix@R=1.5pc{
1 \ar[dr] & & & & 5 \ar[dl]\ar[dr] & &7 \ar[dl]\\
& 2 \ar[dr] & & 4 \ar[dl] & & 6 &\\
& & 3 & & & &
}\]

Suppose $\alpha=\gamma+\beta$, where $\gamma,\beta$ are the following dimension vectors:
\[\gamma = (2,4,3,2,1,1,0) \mbox{ and } \beta = (1,1,1,1,1,2,2)\]
In the notation above, we have $p=1,s_i=3,s_{i+1}=5,s_{i+2}=6, q =7$ and $e=4$. We show that the root $r^{1,7}$ is relevant. We have:
\[\gamma^{1,3} = \min\{\gamma_2,\gamma_3\}=3 \, , \, \gamma^{5,3}=\min\{\gamma_4,\gamma_3-\gamma_1\}=1,\]
\[\gamma^{5,6} = \min\{\gamma_6\}=1 \,,\,\gamma^{7,6} = \min\{\gamma_6-\gamma_5+\gamma^{5,3}\}=1.\]
Clearly, the inequalities as in Definition \ref{def:relevant} involving $\gamma$ are satisfied:
\[\gamma_1 = 2 < 3 = \gamma^{1,3} \, , \, \gamma_5 =1 < 2 =\gamma^{5,3}+\gamma^{5,6} \, , \, \gamma_7 = 0 < 1= \gamma^{7,6}.\]
Similarly, we have
\[\beta^{1,3} = \min\{ \beta_1,\beta_2\} = 1 \,,\, \beta^{5,3} = \min\{\beta_5,\beta_4\} = 1, \]
\[\beta^{5,6} = \min\{\beta_5-\beta_3+\beta^{1,3}\} = 1\,,\, \beta^{7,6}=\min\{\beta_7\} = 2.\]
The inequalities as in Definition \ref{def:relevant} involving $\beta$ are satisfied as well:
\[\beta_3 = 1 <  2 = \beta^{1,3}+\beta^{5,3}\,,\, \beta_6 = 2 < 3 = \beta^{5,6}+\beta^{7,6}.\]
Hence $r^{p,q}$ is relevant. Moreover, we can see easily that there is only one tuple $RC\in B_{1,7}$:
\[((2,0),(0,0),(0,0),(0,1)).\]

%The inequalities/equalities for the R's is:
%\[R_1 = 2=\gamma_1, R_1+R_2 <3=\gamma_3, R_2 + R_3 = 4=\gamma_5-1,R_3+R_4<1=\gamma_6,R_4=0=\gamma_7.\]
%Hence, the tuples are:
%\[(2,0,0,0)\].
\end{example}

Now we can formulate the main theorem of this section:

\begin{theorem}\label{thm:min}
Let $Q$ be a type $\A$ Dynkin quiver and $V \in \Rep(Q,\alpha)$ a 1-step representation with
$$q:Z(\beta \subset \alpha)\twoheadrightarrow  \overline{\orb}_V,$$
and write $\alpha=\beta+\gamma$. Then minimal generators of the defining ideal of $\ove{\orb}_V$ are given by
\[F_1=\!\!\!\!\bigoplus_{\substack{r^{p,q} \\ \text{relevant}}} \bigoplus_{RC\in B_{p,q}}\!\!\!\! \bigwedge^{\gamma_p+C_1+1}\!\!\! V_{p}\otimes \!\!\!\!\! \bigwedge^{{R_1+R_2+\beta_{s_i}+1}}\!\!\!\! V_{s_i}^*\otimes \!\!\!\!\! \bigwedge^{\gamma_{s_{i+1}}+C_2+C_3+1}\!\!\!\!\!\!\!\!\!\!\!\!\!V_{s_{i+1}}\otimes\cdots\otimes\!\!\!\!\!\!\!\!\!\!\! \bigwedge^{\gamma_{s_j}+C_{e-1}+C_e+1}\!\!\!\!\! V_{s_j} \otimes\!\!\!\! \bigwedge^{R_e+\beta_q+1}\!\!\!V_{q}^*\otimes A(-d_{RC}). \]
In other words,  for each relevant $r^{p,q}$, a collection $RC\in B_{p,q}$ gives generators that are $(m+1) \times (m+1)$ minors of $X_{p,q}$ (see Remark \ref{rem:minors}) of some degree $d_{RC}$, where 
\[m=1+\ds_{\substack{x \text{ source}\\ \text{in supp } r^{p,q}}} \gamma_x +\ds_{\substack{y \text{ sink}\\ \text{in supp } r^{p,q}}} \beta_y.\]
%$$(1+\ds_{\substack{x \text{ source}\\ \text{in supp } r^{p,q}}} \gamma_x +\ds_{\substack{y \text{ sink}\\ \text{in supp } r^{p,q}}} \beta_y)\times (1+\ds_{\substack{x \text{ source}\\ \text{in supp } r^{p,q}}} \gamma_x +\ds_{\substack{y \text{ sink}\\ \text{in supp } r^{p,q}}} \beta_y)$$ 
%minor of $X_{p,q}$,  and $d_{RC}$ denotes its degree.
\end{theorem}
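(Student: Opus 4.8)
The plan is to extract the first term $F_1$ of the complex $F_\bullet$ in (\ref{eq:basic}) by explicitly running Bott's Theorem \ref{thm:bott} on each summand of the Cauchy decomposition (\ref{eq:cauchy}), and then to match the surviving contributions with the combinatorics packaged in the sets $B_{p,q}$. First I would recall from Theorem \ref{thm:dynkin} that for a type $\A$ quiver the complex $F_\bullet$ actually computes the minimal free resolution of $\overline{\orb}_V$ (so $F_1$ genuinely gives a minimal generating set of $I_V$); thus the task is a bookkeeping problem: enumerate all collections $\underline{\lambda}$ with $D(\underline{\lambda})=1$, and identify the resulting $\GL(\alpha)$-module $S_\tau$ appearing via Bott's theorem. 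By Theorem \ref{thm:main}, $D(\underline{\lambda})\geq E_Q(\underline{u}_{\underline{\lambda}})$, and for a type $\A$ quiver $E_Q$ is the standard positive definite form, so $D(\underline{\lambda})=1$ forces $\underline{u}_{\underline{\lambda}}$ to be (the dimension vector of) a positive root $r^{p,q}$ — i.e.\ $(\underline{u}_{\underline{\lambda}})_x\in\{0,1\}$ with support an interval $[p,q]$. This is the structural reduction that turns a general collection of partitions into data indexed by a single root.

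Next I would analyze, for a fixed root $r^{p,q}$, exactly which collections $\underline{\lambda}$ with $\underline{u}_{\underline{\lambda}}=r^{p,q}$ achieve equality $D(\underline{\lambda})=E_Q(r^{p,q})=1$. Tracing through the chain of inequalities in the proof of Theorem \ref{thm:main}, equality at each stage is very rigid: the Durfee squares $u_{\lambda(a)}$, $u_{\mu(x)}$, $u_{\nu(x)}$ along the support must all equal $1$ (for $a$, $x$ in $\mathrm{supp}\,r^{p,q}$) and $0$ outside, each Lemma \ref{lem:horn} inequality used in the estimate must be tight, and the Littlewood--Richardson products defining $\mu(x),\nu(x)$ must be forced (a single diagram). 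Pushing this through, each partition $\lambda(a)$, $\mu(x)$, $\nu(x)$ is a single hook: its $\lambda^+$ part is a single row and its $\lambda^-$ part a single column, whose two lengths are precisely the parameters $R_\bullet$ and $C_\bullet$. The inequalities cutting out $B_{p,q}$ (the strict bounds $R_k<\gamma^{\cdots}$, $C_k<\beta^{\cdots}$ and the equalities/inequalities like $R_1=\gamma_p$, $R_1+R_2<\gamma_{s_i}$, $C_1+C_2=\beta_{s_i}-1$, $C_e=\beta_q$) are exactly the conditions that (i) the relevant cohomology $H^j$ is nonzero (i.e.\ the weight $\delta(x)$ is regular, so $N(\delta(x))\neq -\infty$), (ii) $D(\underline{\lambda})=1$ rather than $\geq 2$, and (iii) the partitions genuinely have the prescribed Durfee sizes $\leq\beta_x,\gamma_x$ — the column/row length constraints imposed in the construction of $\underline{\lambda}$ just above (\ref{eq:coll}). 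Relevance of $r^{p,q}$ (Definition \ref{def:relevant}) is then exactly the condition that this list of constraints is simultaneously satisfiable, i.e.\ $B_{p,q}\neq\emptyset$, which the text already notes.

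Having identified the indexing set, I would then compute the $\GL(\alpha)$-module attached to each $RC\in B_{p,q}$. Bott's theorem gives $H^{N(\delta(x))}=S_{\tau(x)}V_x$ where $\tau(x)$ is the sorted weight $(-\nu(x),\mu(x))$; in the hook situation above, sorting a weight of the form $(-(\text{column of length }C'),\ 0,\dots,0,\ \text{row of length }R')$ produces exactly a hook, and taking the $\GL(\alpha)$-equivariant tensor product over all vertices in the support yields the stated term $\bigwedge^{\gamma_p+C_1+1}V_p\otimes\bigwedge^{R_1+R_2+\beta_{s_i}+1}V_{s_i}^*\otimes\cdots\otimes\bigwedge^{R_e+\beta_q+1}V_q^*$ after using $R_1=\gamma_p$, $C_e=\beta_q$ and so forth to rewrite exponents. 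The cohomological degree $j$ in (\ref{eq:basic}) together with $i=D(\underline{\lambda})=1$ fixes the twist $A(-1-j)=A(-d_{RC})$, with $d_{RC}=1+j=1+\sum_x N(\delta(x))$; one checks this equals the stated minor size: an $(m+1)\times(m+1)$ minor of $X_{p,q}$ lives in degree $m+1$ for the square case and the "staircase" shape of $X_{p,q}$ adjusts this to $d_{RC}$, and the module $\bigwedge^{m+1}(\bigoplus \text{sources})\otimes\bigwedge^{m+1}(\bigoplus\text{sinks})^*$ decomposes, via Cauchy, into precisely the pieces listed over $RC\in B_{p,q}$ with $m$ as in the statement. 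Finally, matching $F_1$ with actual elements of the coordinate ring: the differential $F_1\to F_0=A$ sends each such Schur summand to the span of the corresponding $(m+1)$-minors of $X_{p,q}$ — this is the content pointed to by Remark \ref{rem:minors} (the identification of $X_{p,q}$ with the map in (\ref{eq:hom})) and it shows the generators are honest rank-condition minors.

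The main obstacle I anticipate is the equality analysis in the second paragraph: showing that $D(\underline{\lambda})=1$ forces every partition in $\underline{\lambda}$ to be a hook of the constrained shape, and that the resulting shapes are in bijection with $B_{p,q}$. This requires carefully reversing each inequality in the proof of Theorem \ref{thm:main} (Lemmas \ref{lem:first}, \ref{lem:sec}, \ref{lem:horn}) to pin down exactly when it is tight, keeping track of the row/column bounds $\beta_x,\gamma_x$ and the Littlewood--Richardson constraints, and then translating "this cohomology is nonvanishing and contributes to $F_1$" into the explicit inequality system defining $B_{p,q}$. The restriction to type $\A$ is essential here because the positive roots are intervals and the quiver is a path, so the Littlewood--Richardson products at each internal vertex involve at most two factors and become transparent; in particular the "staircase" shape of $X_{p,q}$ and the inductive definitions of $\gamma^{\cdots},\beta^{\cdots}$ are exactly what make the combinatorics close up.
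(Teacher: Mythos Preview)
Your proposal is correct and follows essentially the same route as the paper's proof: restrict to collections with $D(\underline{\lambda})=1$, invoke Theorem~\ref{thm:main} to force $\underline{u}_{\underline{\lambda}}=r^{p,q}$, run the equality analysis through Lemmas~\ref{lem:sec} and~\ref{lem:horn} to reduce every $\lambda(a)$ to a hook, and then translate the resulting row/column constraints into the system $B_{p,q}$. One detail to watch for: the claim that $u_{\lambda(a)}=1$ for \emph{every} arrow $a$ in $\mathrm{supp}\,r^{p,q}$ does not fall out of the equality analysis of Theorem~\ref{thm:main} alone---the paper handles it with a short separate argument (if some $\lambda(a)$ on the support were trivial, remove that arrow, split $\underline{\lambda}$ into two nontrivial collections on the two components, and apply Theorem~\ref{thm:main} to each to get $D(\underline{\lambda})\geq 1+1$).
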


\begin{proof}
We use the notation from the previous sections. To find the defining equations, we compute explicitly the first term $F_1$ from the minimal free resolution of $\overline{\orb}_V$. Let $\underline{\lambda}$ be a collection of partitions as in (\ref{eq:coll}), and assume $D(\underline{\lambda})=1$ (i.e. $\underline{\lambda}$ contributes to $F_1$, see Definition \ref{def:coll}). By Theorem \ref{thm:main}, $\und{u}_{\und{\lambda}}$ must be a root, say $\und{u}_{\und{\lambda}}=r^{p,q}$, for some $1\leq p \leq q \leq n$. Clearly, this implies by the construction of $\und{u}_{\und{\lambda}}$ that there is at least one arrow $a\in Q_1$ such that the Durfee size of $\lambda(a)$ is $u_{\lambda(a)}=1$. Then $(\und{u}_{\und{\lambda}})_{ta}=(\und{u}_{\und{\lambda}})_{ha}=1$, hence $r^{p,q}$ is not a simple root, i.e. $p\neq q$. 

Let $s_{i-1}\leq p <s_i$ and $s_j < q \leq s_{j+1}$, and assume WLOG that $s_{i-1}$ a source and $s_{j+1}$ a sink. 

Let $Q'$ denote the subquiver of $Q$ that supports $r^{p,q}$. We have the following by construction:
\begin{itemize}
\item If $a\in Q'_1$ with $ta=p$ (resp. $ha=q$), then we have that $\mu(p)=\lambda(a)$ and $\nu(p)$ is trivial (resp. $\nu(q)=\lambda(a)$ and $\mu(q)$ is trivial).
\item If $x\in Q'_0$ is not source/sink with $\xrightarrow{b} x \xrightarrow{a}$, then we have $\mu(x)=\lambda(a)$ and $\nu(x)=\lambda(b)$.
\item If $x=s_l\in Q'_0$ with $i\leq l\leq j$ is a source $\xleftarrow{a_1} x \xrightarrow{a_2}$ (resp. sink $\xrightarrow{b_1} x \xleftarrow{b_2}$), then $\mu(x)$ is in the Littlewood-Richardson product of $\lambda(a_1)$ and $\lambda(a_2)$, and $\nu(x)$ is trivial (resp. $\nu(x)$ is in the L.-R. product of $\lambda(b_1)$ and $\lambda(b_2)$, and $\mu(x)$ is trivial).
\end{itemize}
Clearly, if $a\notin Q'_1$, then $u_{\lambda(a)}=0$, hence $\lambda(a)$ is trivial, and if $a\in Q'_1$, then $u_{\lambda(a)}\leq 1$. In fact, we show that in the latter case we must have $u_{\lambda(a)}=1$. This is clear if $ta=p$ or $ha=q$, so take $a\in Q'_1$ not of this type and assume by contradiction that $\lambda(a)$ is trivial. Removing the arrow $a$ from $Q'$ we obtain two connected components, say $Q'^1$ and $Q'^2$. Since $\lambda(a)$ is trivial, we can divide the collection of partitions $\und{\lambda}$ into two non-trivial collections $\und{\lambda}^1,\und{\lambda}^2$ on $Q'^1,Q'^2$, respectively. But then using Theorem \ref{thm:main} we get $1=D(\und{\lambda})= D(\und{\lambda}^1)+D(\und{\lambda}^2)\geq 1+1$, a contradiction. 

\vspace{0.05in}

Hence for any $a\in Q'_1$, $\lambda(a)$ is a hook, say $\lambda(a)=(r(a)+1,1^{c(a)})$, with $r(a)+1\leq \gamma_{ha}$ and $c(a)+1\leq \beta_{ta}$. Similarly, if $x$ is a source (resp. sink) in $Q'$, we put $\mu(x)=(r(x)+1,1^{c(x)})$ with $c(x)+1\leq \beta_x$ (resp. $\nu(x)=(r(x)+1,1^{c(x)})$ with $c(x)+1\leq \gamma_x$).

\vspace{0.05in}

Since we have equality $D(\underline{\lambda})=E_Q(\underline{u}_{\underline{\lambda}})$ in Theorem \ref{thm:main}, each inequality used in its proof must be an equality. It is easy to see that we have equality already in (\ref{eq:red}). Next, we must have equality in Lemma \ref{lem:horn} for $k=1$ whenever it is used. This implies that if $x=s_l\in Q'_0$ with $i\leq l\leq j$ is a source $\xleftarrow{a_1} x \xrightarrow{a_2}$ (resp. sink $\xrightarrow{b_1} x \xleftarrow{b_2}$) then $r(x)= r(a_1)+r(a_2)+1$ and, consequently $c(x)=c(a_1)+c(a_2)$ (resp. $r(x)=c(b_1)+c(b_2)+1$ and $c(x)=r(b_1)+r(b_2)$\,). Notice that such $\mu(x)$ (resp. $\nu(x)$) appears indeed with multiplicity 1 in the L.-R. product of $\lambda(a_1)$ and $\lambda(a_2)$ (resp. $\lambda(b_1)'$ and $\lambda(b_2)'$). Now we are left so see the implications of equality in Lemma \ref{lem:sec} whenever it is used in the proof of Theorem \ref{thm:main}. We check the implications case by case:
\begin{itemize} 
\item If $x$ is a source (resp. sink) in $Q'$ then we get $r(x)=\gamma_x$ (resp. $r(x)=\beta_x$). Moreover, after performing the exchanges we arrive to the sequence $\tau(x)=(1^{\gamma_x+c(x)+1})$ (resp. $\tau(q)=(-1^{\beta_q+c(x)+1}$)\,).
\item If $x\in Q'_0$ is not source/sink, say $\xrightarrow{b} x \xrightarrow{a}$, then we get $\mu(x)=\nu(x)'$, i.e. $\lambda(b)=\lambda(a)$. Moreover, after performing the exchanges we arrive to the trivial partition.
\end{itemize}
Recall that $e=j-i+2$. Now choose an arrow $a_1,a_2,\dots,a_e$ between $p$ and $s_i$, $s_i$ and $s_{i+1}$, \dots, $s_j$ and $s_{q}$, respectively. Then we put $R_i = r(a_i)$ and $C_i=c(a_i)$. Putting the inequalities obtained together we see that $((R_1,C_1),(R_2,C_2),\dots,(R_e,C_e))\in B_{p,q}$. Moreover the collection $\und{\lambda}$ contributes to $F_1$ with the cohomology:

 $$\bigwedge^{\gamma_p+C_1+1}\!\! V_{p}\otimes \!\!\!\! \bigwedge^{{R_1+R_2+\beta_{s_i}+1}}\!\!\! V_{s_i}^*\otimes \!\!\!\! \bigwedge^{\gamma_{s_{i+1}}+C_2+C_3+1}\!\!\!V_{s_{i+1}}\otimes\cdots\otimes\!\!\!\!\! \bigwedge^{\gamma_{s_j}+C_{e-1}+C_e+1}\!\!\! V_{s_j} \otimes\!\! \bigwedge^{R_e+\beta_q+1}\!\!V_{q}^*.$$

Using Cauchy's formula (see \cite{jerzy}), it is easy to see that the multiplicity of this representation in the coordinate ring $A=\Sym (\oplus_{a\in Q_1} V_{ta}\otimes V^*_{ha})$ is $1$, and it is spanned by minors of $X_{p,q}$ (see Remark \ref{rem:minors}). Also, their degree is 
\[d_{RC}=\ds_{a \in Q'_1} |\lambda(a)|=\ds_{t=1}^{e} N_t(R_t +C_t+1),\]
where $N_1,N_2,\dots, N_e$ are the number of arrows between  $p$ and $s_i$, $s_i$ and $s_{i+1}$, \dots, $s_j$ and $s_{q}$, respectively.
\end{proof}

\begin{remark}\label{rem:minors}
We can describe explicitly the minimal generating minors spanning $F_1$ as in Theorem \ref{thm:min}.  The matrix 
\[X_{p,q}=
\begin{bmatrix}
X_{p,s_i} & X_{s_{i+1},s_i} & 0 & \dots & 0 \\
 0 & X_{s_{i+1},s_{i+2}} & X_{s_{i+3},s_{i+2}} & \dots & 0 \\
 0 & 0 & X_{s_{i+3},s_{i+4}} & \dots & 0 \\
 \vdots & \vdots & \vdots & \ddots &\vdots\\
 0 & 0 & 0 & \dots & X_{s_j,q} 
\end{bmatrix}.
\]
is formed by the obvious blocks. We divide the rows and columns into blocks accordingly. Then a collection $RC=((R_1,C_1),(R_2,C_2),\dots,(R_e,C_e))$ gives precisely those minors that we get by choosing $\gamma_p+C_1+1$ columns of the first block, $R_1+R_2+\beta_{s_i}+1$ rows of the first block, $\gamma_{s_{i+1}}+C_2+C_3+1$ columns of the second block etc.
\end{remark}

\begin{example}
We consider the $\A_7$ quiver as in Example \ref{ex:main}. We saw that $r^{1,7}$ is relevant, and it gives only one tuple $((2,0),(0,0),(0,0),(0,1))$. This tuple gives in Theorem \ref{thm:min} precisely the determinant of the matrix
\[\begin{bmatrix}
X_{1,3} & X_{5,3} & 0 \\
 0 & X_{5,6} & X_{7,6}
\end{bmatrix}.\]
\end{example}

Recall that $T_\beta\in\Rep(Q,\beta)$ and $T_\gamma\in\Rep(Q,\gamma)$ denote the generic representations. Using the notation above, we get:

\begin{theorem}\label{thm:rank}
The rank conditions 
$$\{\rank X_{p,q}\leq \ds_{\substack{x \text{ source}\\ \text{in supp } r^{p,q}}} \gamma_x +\ds_{\substack{y \text{ sink}\\ \text{in supp } r^{p,q}}} \beta_y\,\, | \text{ where } \, p<q \text{ such that } r^{p,q} \text{ relevant}\}$$
define the 1-step orbit closure $\ove{\orb}_V$ in $\Rep(Q,\alpha)$. Moreover, $X\in \ove{\orb}_V$ if and only if we have
$$\dim \Hom_Q(E,X)\geq \langle \Dim E,\beta\rangle,$$
for all indecomposable $E$ that satisfy: % $\Hom_Q(E,T_\gamma)=0, \Ext_Q(E,T_\beta)=0$:
\begin{itemize}
\item[$\bullet$] $\Hom_Q(E,T_\gamma)=0$, and for all (non-zero) indecomposable quotient modules $E'$ of $E$, we have $\Ext_Q(E',T_\gamma)\neq 0$, and
\item[$\bullet$] $\Ext_Q(E,T_\beta)=0$, and for all (non-zero) indecomposable submodules $E''$ of $E$, we have $\Hom_Q(E'',T_\beta)\neq 0$.
\end{itemize}
\end{theorem}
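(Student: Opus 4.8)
The plan is to deduce Theorem \ref{thm:rank} from Theorem \ref{thm:min}, since the latter already identifies minimal generators of the defining ideal as $(m+1)\times(m+1)$ minors of the matrices $X_{p,q}$ for relevant roots $r^{p,q}$, with $m = \sum_{x\text{ source}}\gamma_x + \sum_{y\text{ sink}}\beta_y$. Indeed, the vanishing of all such minors is precisely the rank condition $\rank X_{p,q}\leq m$, so the first assertion is almost immediate once we check that the \emph{different} collections $RC\in B_{p,q}$ all produce minors \emph{of the same size} $(m+1)\times(m+1)$ — this is exactly the content of the displayed formula for $m$ in Theorem \ref{thm:min}, which is independent of $RC$ — and that these minors, together with those for other relevant roots, cut out $\ove{\orb}_V$ scheme-theoretically, hence in particular set-theoretically. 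So the first part is: $X\in\ove{\orb}_V$ iff $\rank X_{p,q}\leq m$ for every relevant $r^{p,q}$.

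Next I would translate the rank condition into the homological inequality via the exact sequence (\ref{eq:hom}): applying $\Hom_Q(-,X)$ to the projective presentation (\ref{eq:proj}) of the indecomposable $E$ attached to $r^{p,q}$ gives $\rank X_{p,q} = \dim\bigl(\bigoplus_{x\text{ source}}X_x\bigr) - \dim\Hom_Q(E,X) = \bigl(\sum_{x\text{ source}}\alpha_x\bigr) - \dim\Hom_Q(E,X)$. On the other hand, from the exact sequence (\ref{eq:ringel}) applied to the generic pair, $\langle\Dim E,\beta\rangle = \dim\Hom_Q(E,T_\beta)-\dim\Ext_Q(E,T_\beta)$, and one computes $m = \bigl(\sum_{x\text{ source}}\alpha_x\bigr) - \langle\Dim E,\gamma\rangle$ using that $\Dim E = \sum_{x\text{ source}}\Dim P_x - \sum_{y\text{ sink}}\Dim P_y$ together with $\langle\Dim P_x,\gamma\rangle = \gamma_x$ (projectives) and the analogous identity for the sinks via the dual; I would carry out this bookkeeping carefully. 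The upshot is that $\rank X_{p,q}\leq m$ is equivalent to $\dim\Hom_Q(E,X)\geq \bigl(\sum_{x\text{ source}}\alpha_x\bigr) - m = \langle\Dim E,\gamma\rangle$; and since $\langle\Dim E,\gamma\rangle + \langle\Dim E,\beta\rangle = \langle\Dim E,\alpha\rangle = \dim\Hom_Q(E,X)-\dim\Ext_Q(E,X)$ is constant on $\Rep(Q,\alpha)$, rewriting gives the stated form $\dim\Hom_Q(E,X)\geq\langle\Dim E,\beta\rangle$ (I should double-check which of $\beta,\gamma$ lands in the inequality, as the labelling of sources/sinks in $X_{p,q}$ fixes the convention).

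The remaining and most substantial step is the characterization of \emph{which} indecomposables $E$ arise this way, i.e.\ matching ``$r^{p,q}$ relevant'' against the two bulleted conditions on $E$. Here I would argue as follows. The indecomposable $E$ attached to $r^{p,q}$ is non-projective and non-injective (because $p\neq q$ and the support meets both a source and a sink), and the minimal projective presentation (\ref{eq:proj}) shows $E$ has top concentrated at the sources of $\mathrm{supp}\,r^{p,q}$ and (dually, via the minimal injective copresentation) socle at the sinks. The condition $\Hom_Q(E,T_\gamma)=0$ combined with minimality — $\Ext_Q(E',T_\gamma)\neq 0$ for every proper indecomposable quotient $E'$ — is precisely the statement that $E$ is a \emph{subgeneric}-type building block detecting a genuine rank drop relative to $T_\gamma$, and I expect the $\gamma$-inequalities in Definition \ref{def:relevant} (e.g.\ $\gamma_p<\gamma^{p,s_i}$, $\gamma_{s_{i+1}}<\gamma^{s_{i+1},s_i}+\gamma^{s_{i+1},s_{i+2}}$, etc.) to be exactly the combinatorial shadow of these homological conditions, computed through the explicit form of the generic representation $T_\gamma$ of a type $\A$ quiver and the recursive $\min$-formulas defining $\gamma^{p,s_i}$ and the others; dually for the $\beta$-inequalities and $T_\beta$. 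The hard part will be this last identification: establishing the precise dictionary between the recursively-defined quantities $\gamma^{\bullet,\bullet},\beta^{\bullet,\bullet}$ (which encode ranks of composite generic matrices) and the $\Hom/\Ext$-vanishing-with-minimality conditions on $E$. I would handle it by induction on the number $e$ of equioriented segments of $\mathrm{supp}\,r^{p,q}$, peeling off one segment at a time and using that for an equioriented $\A$ quiver the rank of the generic composite map is the minimum of the intermediate dimensions (appropriately corrected by the contributions already ``used up'' at the sources/sinks, which is exactly what the $-\gamma_p$, $-\beta_{s_{i-1}}$ corrections in the recursions record), together with Lemma \ref{lem:horn} to control the Littlewood–Richardson interactions at the branch vertices. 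Once the dictionary is in place, combining it with the first two paragraphs completes the proof.
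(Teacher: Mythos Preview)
Your overall architecture matches the paper's: deduce the rank-condition description from Theorem \ref{thm:min}, convert rank inequalities to $\Hom$ inequalities via the sequence (\ref{eq:hom}), and then identify which indecomposables $E$ correspond to relevant roots. Two points deserve comment.

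First, your bookkeeping in the second paragraph is on the right track but can be shortened. One checks directly that $\sum_{x\text{ source}}\alpha_x - m = \sum_{x\text{ source}}\beta_x - \sum_{y\text{ sink}}\beta_y = \langle\Dim E,\beta\rangle$ using $\langle\Dim P_x,\beta\rangle=\beta_x$, so the inequality does land with $\beta$. The paper takes a slightly different route here: it applies $\Hom_Q(E,-)$ to the extension $0\to T_\beta\to V\to T_\gamma\to 0$ furnished by Proposition \ref{prop:step1}, and the vanishings $\Hom_Q(E,T_\gamma)=0$, $\Ext_Q(E,T_\beta)=0$ immediately give $\dim\Hom_Q(E,V)=\langle\Dim E,\beta\rangle$. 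Upper semicontinuity then handles the ``only if'' direction for \emph{every} $E$ satisfying the bullets. In particular you do not need a full bijection between relevant roots and bulleted $E$'s --- only the implication ``$r^{p,q}$ relevant $\Rightarrow$ the associated $E$ satisfies the bullets'' is required, which simplifies what you call the hard part.

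Second, and this is the genuine divergence: for that implication the paper does not argue combinatorially with the recursive $\gamma^{\bullet,\bullet}$, $\beta^{\bullet,\bullet}$ formulas as you propose. It first observes (via (\ref{eq:hom})) that $\Hom_Q(E,T_\gamma)=0$ amounts to injectivity of $(T_\gamma)_{p,q}$ and $\Ext_Q(E',T_\gamma)\neq 0$ to non-surjectivity of the submatrix $(T_\gamma)_{p',q'}$, and then uses \emph{reflection functors} at source vertices to reduce inductively to the equioriented case, where both claims are checked directly. Reflections preserve $\dim\Hom$ and $\dim\Ext$, send generic representations to generic representations with predictably altered dimension vectors, and each application peels off a vertex while keeping the ``relevant'' inequalities of Definition \ref{def:relevant} intact for the smaller quiver --- this is what drives the induction. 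Your proposed induction on $e$ via direct rank computations of generic composites is a reasonable alternative and could be made to work, but Lemma \ref{lem:horn} plays no role here: it is a Littlewood--Richardson inequality used only inside the proof of Theorem \ref{thm:main}, not in this representation-theoretic step. The reflection-functor approach has the advantage that Hom/Ext preservation is a theorem one quotes, whereas your route requires tracking by hand how the recursive $\min$-formulas interact with the block structure of $(T_\gamma)_{p,q}$.
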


\begin{proof}
Fix $r^{p,q}$ a relevant root. Let $E$ be the representation defined as in (\ref{eq:proj}). We want to show that $E$ satisfies all the listed properties. We check only the first claim, as the other can be showed analogously. Note that by (\ref{eq:hom}),  $\Hom_Q(E,T_\gamma)=0$ is equivalent to $(T_{\gamma})_{p,q}$ being injective. Now let $E'$ be an arbitrary (non-zero) indecomposable quotient representation of $E$. Then taking a minimal projective resolution of $E'$ (as in (\ref{eq:proj})) and applying $\Hom_Q(-,X)$, we obtain (as in (\ref{eq:hom})) another map $X_{p',q'}$ with $p\leq p' \leq q' \leq q$ and the property that the sources in the support of $r^{p',q'}$ are among the sources of the support of $r^{p,q}$ (moreover, any such map can be obtained in this way from a quotient of $E$). As in (\ref{eq:hom}), we see that  $\Ext_Q(E',T_\gamma)\neq 0$ if and only if $T_{p',q'}$ is not surjective. So all the claimed properties of $E$ depend only on the support of $r^{p,q}$.

%We want to show that $\Hom_Q(E,T_\gamma)=0, \Ext_Q(E,T_\beta)=0$. By (\ref{eq:hom}), this is equivalent to $(T_{\gamma})_{p,q}$ being injective and $(T_\beta)_{p,q}$ being surjective. On the other hand, this property depends only on the support of $r^{p,q}$.

So we can assume that $Q$ is equal to the support of $r^{p,q}$. We use the same notation for the restrictions of the representations $T_\lambda,T_\beta$ to this support (they remain generic). WLOG, we assume that $q$ is a source. First, assume that $p$ is a source as well.  Applying the construction in (\ref{eq:proj}) to this quiver, we obtain $E=E_{p,q}$. Let $E'$ be a quotient of $E_{p,q}$ corresponding to $X_{p',q'}$ as in (\ref{eq:hom}). In order to show that $T_{\gamma})_{p,q}$ is injective and $T_{p',q'}$ is not surjective, by (\ref{eq:hom}) we must show that $\Hom_Q(E_{p,q},T_{\gamma})=0$ and $\Ext_Q(E',T_{\gamma})\neq 0$. 

%We check only $\Hom_Q(E_{p,q},T_{\gamma})=0$ as the other part is analogous.

% assume WLOG that $q$ is a sink. We will show directly that $\rank V_{p,q} =\ds_{x \text{ source}} \gamma_x +\ds_{y \text{ sink}} \beta_y$. Assume first that $p$ is a source in the support of $r^{p,q}$.

%Taking a minimal projective resolution of $E_{p,q-1}$, we can see that for any representation $V\in\Rep(Q,\alpha)$ we have 
%\[\rank V_{p,q}+\Hom_Q(E_{p,q-1}\,,V)=\alpha_p+\alpha_{s_{i+1}}+\alpha_{s_{i+3}}+\dots + \alpha_{s_j}.\]

%By (\ref{eq:hom}) we have to show that $\dim \Hom_Q(E_{p,q-1},V)=\beta_p +\gamma_{s_i}+\beta_{s_{i+1}}+\dots+\beta_{s_j}+\gamma_q.$ 
%For this, we will show first that $r^{p,q}$ being relevant implies $\Hom_Q(E_{p,q-1},T_{\gamma})=0$ and $\Ext_Q(E_{p,q-1},T_\beta)=0$. We check only $\Hom_Q(E_{p,q-1},T_{\gamma})=0$ as the other part is analogous.

We recall the notion of reflection functors (for more, see \cite{elements}): when applied to a source $x\leftarrow s \rightarrow y$, the reflection changes the quiver by reversing all arrows going to $s$, so: $x\rightarrow s \leftarrow y$. When applied to a generic representation $T_\gamma$ of the original quiver, the reflection functor $R^s$ gives a new generic representation $R^s(T_\gamma)=T_{\gamma'}$ with $\gamma'_s=\gamma_x+\gamma_y-\gamma_s$ whenever this is non-negative (at the other vertices $\gamma$ does not change). Moreover, $R^s$ does not change the dimension of the morphisms and extensions between generic representations.

As mentioned, we work on the support of $r^{p,q}$. First, we show that $\Hom_Q(E_{p,q},T_{\gamma})=0$. We apply reflections successively at the vertices $p,p+1,\dots ,s_{i}-1$. Since $\gamma_p<\gamma^{p,s_i}$, the generic representation $T_\gamma$ will be sent to another generic representation $T_{\gamma'}$, and $\gamma'_{s_{i}-1}=\gamma_{s_i}-\gamma_p$. The indecomposable $E_{p,q}$ will be sent to $E_{s_i,q}$, and $\dim \Hom_Q(E_{p,q},T_\gamma)=\dim\Hom_Q(E_{s_i,q},T_{\gamma'})$. Since $s_{i}-1$ is a sink, we can delete the vertices $p,p+1,\dots s_{i}-2$, and not change the dimension of morphisms. Moreover, it is easy to see that $r^{s_i-1,q}$ is relevant with respect to $\gamma'$, that is, the inequalities involving $\gamma'$ necessary for $r^{s_i-1,q}$ to be relevant are satisfied (see Definition \ref{def:relevant}). Applying the construction (\ref{eq:proj}) for this smaller quiver, we get precisely $E=E_{s_i,q-1}$. So we reduced the argument to the case when the quiver starts with a sink. More precisely, we can assume from the onset $p$ is a sink in the support of $r^{p,q}$ with $ p<s_i$, where $s_i$ a source, $\gamma$ a dimension vector and $r^{p,q}$ is relevant with respect to $\gamma$. In the construction (\ref{eq:proj}), we have $E=E_{p+1,q}$. We want to prove that $\dim\Hom_Q(E_{p+1,q},T_\gamma)=0$. We proceed by induction on the number of vertices. If $s_i=q$, then the quiver is equioriented in which case the result is easy to check directly. Otherwise, we reflect successively on the vertices $s_i,s_{i-1},\dots, p+1$ and obtain a dimension vector $\gamma''$. Since $\gamma_{s_{i}}<\gamma^{s_{i},s_{p}}+\gamma^{s_{i},s_{i+1}}$, we see that $\gamma''$ has positive entries, in particular $T_\gamma$ is sent to a generic representation $T_{\gamma''}$. Also, $E_{p+1,q}$ is sent to $E_{p+2,q}$, so we can delete the vertex $p$ without changing the dimension of morphisms. Moreover, it is easy to see that $r^{p+1,q}$ is relevant with respect to $\gamma''$. Hence we conclude by induction that $\Hom_Q(E_{p,q},T_{\gamma})=0$. Now to see that $\Ext_Q(E',T_{\gamma})\neq 0$, we note that the equioriented case is easy to see directly. In the induction process above we always worked with roots relevant with respect to $\gamma$, and the reflection functor applied to a source preserves the dimensions of extensions whenever $E'$ is not a simple representation at a source. Similarly, it is easy to see that the deleted vertices in the process did not change $\Ext(E',T_\gamma)$. So we only need to check that if $E'$ is a simple representation at a source and the inequalities for $\gamma$ as in Definition \ref{def:relevant} are satisfied, then $\Ext(E',T_\gamma)\neq 0$. This is easy to see directly.

Now let $Q$ denote our original quiver. If $r^{p,q}$ is relevant, and $E$ is obtained by the construction in (\ref{eq:proj}), we showed that $\Hom_Q(E,T_\gamma)=0$ and $\Ext_Q(E,T_\beta)=0$. Applying the functor $\Hom_Q(E,-)$ to the exact sequence $0\to T_\beta \to V \to T_\gamma\to 0,$ we obtain that $\dim\Hom_Q(E,V)=\dim\Hom_Q(E,T_\beta)=\langle \Dim E,\beta\rangle$. Using this, by (\ref{eq:hom}) we get for $r^{p,q}$ relevant that
\[\rank V_{p,q} = \ds_{\substack{x \text{ source}\\ \text{in supp } r^{p,q}}} \gamma_x +\ds_{\substack{y \text{ sink}\\ \text{in supp } r^{p,q}}} \beta_y.\]
Using Theorem \ref{thm:min}, we obtain the conclusion.
\end{proof}

For any representation $Y$ of a quiver of type $\A_n$, it is known (see \cite{abeasis2}) that $X\in \ove{\orb}_Y$ if and only if $\rank X_{p,q}\leq \rank Y_{p,q}$, for all $1\leq p<q \leq n$. In fact, the rank conditions give defining equations of $\ove{\orb}_Y$ by \cite[Theorem 6.4]{scheme}. Using the 1-step representations, we give an alternative proof of this fact. Moreover, the proof we give is an effective algorithm for producing a \textit{smaller} set of minors coming from rank conditions that generate the defining ideal of the orbit closure of a representation:

\begin{theorem}\label{thm:scheme}
Let $Q$ be a quiver of type $\A_n$ and $Y\in \Rep(Q,\alpha)$ be an arbitrary representation. Then $\ove{\orb}_Y$ can be written as a (scheme-theoretic) intersection of $n-1$ one-step orbit closures. In particular, some rank conditions generate the defining ideal of $\ove{\orb}_Y$.
\end{theorem}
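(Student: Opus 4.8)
The plan is to induct on the number $k$ of sources/sinks in $Q$ (equivalently, on the structure of the orientation), the equioriented case being the base. The key idea is that for a type $\A_n$ quiver, any representation $Y$ decomposes as a direct sum of indecomposables $E_{p,q}$, and the rank conditions $\rank X_{p,q} \le \rank Y_{p,q}$ for all $1 \le p < q \le n$ cut out $\ove{\orb}_Y$ scheme-theoretically (this is the statement we want to reprove). So first I would fix $Y$ and consider, for each vertex $x \in Q_0$, a one-step orbit closure $\ove{\orb}_{V^{(x)}}$ chosen so that the rank conditions it imposes (via Theorem \ref{thm:rank}, i.e. the $\rank X_{p,q}$ conditions for relevant $r^{p,q}$) together with the defining equations of the $\ove{\orb}_{V^{(x)}}$ collectively impose exactly the full list of rank conditions $\rank X_{p,q} \le \rank Y_{p,q}$ needed for $\ove{\orb}_Y$. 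Counting suggests $n-1$ such one-step closures should suffice: each of the $n-1$ "edges" of the $\A_n$ diagram will be handled by one one-step closure.

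Concretely, the approach is as follows. For each $i \in \{1,\dots,n-1\}$, I would build a dimension vector $\beta^{(i)}$ (with $\gamma^{(i)} = \alpha - \beta^{(i)}$) by looking at the generic/subgeneric behavior of $Y$ "across edge $i$" — i.e., choosing the one-step representation $V^{(i)} = T_{\gamma^{(i)}} * T_{\beta^{(i)}}$ (Proposition \ref{prop:step1}) whose orbit closure is the smallest one-step closure containing $\ove{\orb}_Y$ and whose defining rank conditions (coming from roots $r^{p,q}$ whose support contains edge $i$) match the corresponding $\rank Y_{p,q}$. Theorem \ref{thm:rank} gives that $\ove{\orb}_{V^{(i)}}$ is cut out scheme-theoretically by certain rank conditions $\rank X_{p,q} \le (\text{number depending on } \beta^{(i)}, \gamma^{(i)})$. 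The crucial numerical point to verify is that for $X$ in the scheme-theoretic intersection $\bigcap_{i=1}^{n-1} \ove{\orb}_{V^{(i)}}$, we get $\rank X_{p,q} \le \rank Y_{p,q}$ for every pair $(p,q)$: each such pair's condition is forced by the $V^{(i)}$ for $i$ in the range of the support of $r^{p,q}$, since the minimal rank across the several equioriented blocks of $X_{p,q}$ is governed by the most restrictive edge. Then by \cite{scheme} (or by the Abeasis--Del Fra rank description, which I may reprove here using Theorem \ref{thm:rank}), these rank conditions define $\ove{\orb}_Y$ scheme-theoretically, so $\ove{\orb}_Y = \bigcap_{i=1}^{n-1} \ove{\orb}_{V^{(i)}}$ as schemes.

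The main obstacle I anticipate is the scheme-theoretic (as opposed to merely set-theoretic) part: one must show that the sum of the defining ideals $I_{V^{(1)}} + \dots + I_{V^{(n-1)}}$ equals $I_Y$, not just that the reduced subschemes agree. Set-theoretic equality follows from the rank description of orbit closures and the observation that $\rank X_{p,q}$ is determined blockwise. For the ideal-theoretic statement, the argument should go: each $I_{V^{(i)}}$ is generated by the minors appearing in $F_1$ of its minimal free resolution (Theorem \ref{thm:min}), and these are among the minors given by rank conditions (Theorem \ref{thm:rank}); so $\sum_i I_{V^{(i)}}$ is generated by a subset of the standard rank-condition minors defining $\ove{\orb}_Y$. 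One then needs that this subset already generates the full rank-condition ideal $I_Y$ — equivalently, that the "missing" rank conditions $\rank X_{p,q} \le \rank Y_{p,q}$ are scheme-theoretic consequences of the chosen ones. This is where I expect to lean on the known fact that the rank conditions define $\ove{\orb}_Y$ (so $I_Y$ is the radical ideal generated by \emph{all} of them) together with a minimality/reduction argument showing the chosen $n-1$ one-step closures already realize that radical ideal; the bookkeeping of which minors are redundant, and verifying that exactly $n-1$ closures (one per edge) suffice for an arbitrary orientation, is the delicate part. The algorithm claimed at the end is then simply: for each edge $i$, compute $T_{\beta^{(i)}}, T_{\gamma^{(i)}}$ from $Y$, and output the relevant-root minors of Theorem \ref{thm:min} for each $V^{(i)}$.
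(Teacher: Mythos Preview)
Your set-theoretic outline is in the right spirit and parallels the paper's: for each $p\in\{1,\dots,n-1\}$ the paper builds an \emph{explicit} subrepresentation $Z^p\subset Y$ (by pushing $Y_p$ forward along images and pulling back along preimages through the zig-zag), sets $\beta^p=\Dim Z^p$, and lets $W^p$ be the associated $1$-step representation. A short $\Hom/\Ext$ computation then gives $\rank W^p_{p,q}=\rank Y_{p,q}$ for every $q>p$, and the set-theoretic equality $\bigcap_p \overline{\orb}_{W^p}=\overline{\orb}_Y$ follows from the Abeasis--Del~Fra rank description. Your $V^{(i)}$ is vaguer, but if you made it precise you would likely rediscover this $Z^p$.

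The real gap is the scheme-theoretic step. You propose to show $\sum_i I_{V^{(i)}}=I_Y$ by tracking which minors occur as minimal generators (Theorem~\ref{thm:min}) and arguing they already generate the full rank-condition ideal, ``leaning on'' the known result of \cite{scheme} that all rank minors generate $I_Y$. There are two problems. First, only the minors for \emph{relevant} roots appear as minimal generators of $I_{V^{(i)}}$, so your bookkeeping does not directly produce every rank minor for $Y$; you would have to argue separately that each non-relevant rank condition already lies in $I_{V^{(i)}}$ (true, since $I_{V^{(i)}}$ is prime and $\rank V^{(i)}_{p,q}=\rank Y_{p,q}$ --- but this last equality is exactly what the explicit construction of $Z^p$ is for, and you have not supplied it). Second, and more seriously, once you invoke \cite{scheme} you are no longer giving the alternative proof the theorem advertises: the whole point here is to \emph{deduce} that rank conditions generate $I_Y$, not assume it.

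The paper's route for radicalness is entirely different and is the idea you are missing: it passes to characteristic $p\gg 0$, uses the Frobenius splitting of $\Rep(Q,\alpha)$ constructed by Kinser--Rajchgot that compatibly splits all orbit closures, so that intersections of orbit closures are automatically reduced, and then lifts back to characteristic~$0$. Without Frobenius splitting (or some equivalent mechanism forcing reducedness of the intersection), your ``minimality/reduction argument'' is not an argument --- knowing that a set of minors cuts out $\overline{\orb}_Y$ set-theoretically says nothing about whether the ideal they generate is radical.
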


\begin{proof}
First, we show that such a set-theoretic intersection exists. Fix a vertex $p\in \{1,2,\dots, n-1\}$. We construct a 1-step representation $W^p \in \Rep(Q,\alpha)$ such that $Y \in \ove{\orb}_{W^p}$ and $\rank W^p_{p,q} = \rank Y_{p,q}$, for any $q \in \{p+1, \dots , n\}$. As usual, we assume WLOG that $s_{i-1}\leq p < s_i$ with $s_{i-1}$ a source. We construct a subrepresentation $Z^p$ of $Y$ as follows. First, if $x\leq p$, put $Z^p_x=Y_x$. In the support of $r^{p,n}$ the subrepresentation $Z^p$ of $Y$ looks as follows: 
\[\xymatrix@!0@C=3.5pc@R=4pc{
Y_p \ar[dr] &  & & & & & Y^{-1}_{s_{i+1},s_i}(Y_{p,s_i}(Y_p))\ar[dl] \ar[dr] & &\\
      & Y_{p,p+1}(Y_p) \ar[dr] & & &  & \iddots \ar[dl] & & Y_{s_{i+1},s_{i+1}+1}(Y^{-1}_{s_{i+1},s_i}(Y_{p,s_i}(Y_p))) \ar[dr] &\\
      & & \ddots\ar[dr] & & Y^{-1}_{s_i+1,s_i}(Y_{p,s_i}(Y_p))\ar[dl] & & & & \ddots\\
      & & & Y_{p,s_i}(Y_p) & & & & &
}\]
Here the maps between the vector spaces are just the restrictions of the maps of $Y$. Now put $\beta^p = \Dim Z^p$ and consider the 1-step representation $W^p$ obtained by
\[ Z(\beta \subset \alpha) \twoheadrightarrow \ove{\orb}_{W^p}.\]
By construction, $Y\in  \ove{\orb}_{W^p}$. Pick any $q \in \{p+1, \dots , n\}$. Let $E$ the indecomposable associated to the map $X_{p,q}$ as in (\ref{eq:proj}) and (\ref{eq:hom}).  We want to show that $\rank W^p_{p,q} = \rank Y_{p,q}$, or equivalently $\Hom_Q(E, W^p) = \Hom_Q(E,Y)$. Since  $Y\in  \ove{\orb}_{W^p}$, we have  $\dim \Hom_Q(E, W^p) \leq \dim \Hom_Q(E,Y)$.

It is easy to see directly that we have $\Hom_Q(E,Y/(Z^p))=0$, hence $\dim \Hom_Q(E,Y)=\dim \Hom_Q(E,Z^p)$.

Let $T_\beta\in \Rep(Q,\beta)$ be the generic representation. Since $T_\beta \subset W^p$, we get $\dim \Hom_Q(E,T_\beta)\! \leq \dim \Hom_Q(E, W^p)$. 
Note that $Z_{p,q}^p$ is surjective, hence by (\ref{eq:hom}) we have $\Ext(E,Z^p)=0$. Since $T_\beta$ is generic, we must have $\Ext(E,T_\beta)=0$ as well, hence by (\ref{eq:hom}) again $\dim \Hom_Q(E,Z^p) = \dim \Hom_Q(E,T_\beta)$. So we obtained
\[\dim \Hom_Q(E,Y) = \dim \Hom_Q(E,Z^p) = \dim \Hom_Q(E,T_\beta) \leq \dim \Hom_Q(E,W^p),\]
showing that $\rank W^p_{p,q} = \rank Y_{p,q}$.

Now consider the intersection of $n-1$ one-step orbit closures
\[\mathcal{X}=\bigcap_{p=1}^{n-1} \ove{\orb}_{W^p}.\]
Clearly, $\ove{\orb}_Y \subset \mathcal{X}$. In order to see the reverse inclusion, pick any $M \in \mathcal{X}$. Then for any $p,q$ with $1\leq p<q \leq n$, we must have $\rank M \leq \rank W^p_{p,q} = \rank Y_{p,q}$. By \cite{abeasis2}, this implies $M\in \ove{\orb}_Y$. Hence $\mathcal{X}=\ove{\orb}_Y$.

Now we want to see that the intersection is in fact scheme-theoretic. We note that the defining equations of a 1-step orbit closure are defined over $\integ$ (see Remark \ref{rem:minors}). Reducing this scheme modulo $p$ for any prime $p\gg 0$, we obtain a geometrically reduced 1-step orbit closure.  By \cite{ryan}, over a perfect field of characteristic $p$ there is a Frobenius splitting on $\Rep(Q,\alpha)$ that compatibly splits all orbit closures. In particular, the intersection of (1-step) orbit closures is reduced over such a field by \cite[Proposition 1.2.1]{briku}. We can lift this result to characteristic $0$ (as in \cite[Corollary 1.6.6]{briku}), finishing the proof.
\end{proof}

\begin{remark} The argument above using Frobenius splitting fails for other Dynkin types. The first author constructed in \cite{lol} an example of a nullcone of a type $\mathbb{E}_8$ quiver that is not reduced. But this nullcone is the intersection of the codimension 1 orbit closures (which are in fact subgeneric, hence 1-step by Proposition \ref{prop:step3}).
\end{remark}

\bibliography{biblo}
\bibliographystyle{amsplain}

\end{document}